\newtheorem{assumption}{Assumption}
\newtheorem{proposition}{Proposition}
\newtheorem{theorem}{Theorem}
\theoremstyle{remark}
\newtheoremstyle{mytheoremstyle}%
{\topsep}{\topsep}
{}{}            
{}{}            
{0.5em}
{\thmname{\@ifempty{#3}{#1}\@ifnotempty{#3}{#3}}}
\theoremstyle{mytheoremstyle}
\newsavebox\CBox
 \def\newblock{\ }%
\newcommand{\vast}{\bBigg@{4}}
\newcommand{\Vast}{\bBigg@{5}}   
\def\E{\mathbb{E}}
\begin{document}

\title{\vspace{-1em}
Approximating Performance Measures for Slowly Changing Non-stationary Markov Chains \vspace{-0.6em}}

\author{
{\normalsize Zeyu Zheng} \vspace{-0.6em}\\
{\footnotesize Department of Management Science and Engineering} \vspace{-0.8em}\\
{\footnotesize Stanford University, zyzheng@stanford.edu}\\
{\normalsize Harsha Honnappa} \vspace{-0.6em}\\
{\footnotesize School of Industrial Engineering} \vspace{-0.8em}\\
{\footnotesize Purdue University, honnappa@purdue.edu}\\
{\normalsize Peter W. Glynn} \vspace{-0.6em}\\
{\footnotesize Department of Management Science and Engineering} \vspace{-0.8em}\\
{\footnotesize Stanford University, glynn@stanford.edu}\\
}

\date{\today}

\maketitle

\pagestyle{fancy}
\fancyhead[RO,LE]{\small \thepage}
\fancyfoot[C]{}

\begin{abstract} 
\onehalfspacing
\noindent
 This paper is concerned with the development of rigorous approximations to various expectations associated with Markov chains and processes having non-stationary transition probabilities. Such non-stationary models arise naturally in contexts in which time-of-day effects or seasonality effects need to be incorporated. Our approximations are valid asymptotically in regimes in which the transition probabilities change slowly over time. Specifically, we develop approximations for the expected infinite horizon discounted reward, the expected reward to the hitting time of a set, the expected reward associated with the state occupied by the chain at time $n$, and the expected cumulative reward over an interval $[0,n]$. In each case, the approximation involves a linear system of equations identical in form to that which one would need to solve to compute the corresponding quantity for a Markov model having stationary transition probabilities. In that sense, the theory provides an approximation no harder to compute than in the traditional stationary context. While most of the theory is developed for finite state Markov chains, we also provide generalizations to continuous state Markov chains, and finite state Markov jump processes in continuous time. In the latter context, one of our approximations coincides with the uniform acceleration asymptotic due to \cite{massey1998uniform}. \\

\vspace{-1em}
\noindent
\textit{Key words}: non-stationary Markov chains; asymptotic approximations; slowly changing; Poisson's equation; expected discounted reward; transient expectation; expected cumulative reward
\end{abstract}

\vspace{-1em}
\noindent\makebox[\linewidth]{\rule{\linewidth}{0.8pt}}

\newpage

\section{Introduction}

The realistic modeling of many problems arising in operations research and operations management requires explicit incorporation of time-of-day effects, day-of-week effects, or seasonality effects. Such non-stationarities also occur in settings in which secular trends, such as a steady increase in demand for a product, may affect the system dynamics over an operationally meaningful time scale. Unfortunately, the development of closed-form theory for the great majority of stochastic models requires an assumption of stationary dynamics, free of such non-stationarities. In particular, in the Markov chain and continuous time Markov process setting, the assumption of stationary transition probabilities gives one the ability to easily compute many performance measures as solutions to systems of linear equations; see, for example, \cite{asmussen2008applied} and \cite{heyman1982stochastic}. Specifically, the equilibrium distribution of such processes can easily be computed by solving the linear system corresponding to the stationarity distribution. 

Given this state of affairs, it would be convenient if there were a general approach to obtaining approximations for Markov models having non-stationary transition probabilities in which the approximations involved linear systems of equations of identical structure to that obtained in the setting of stationary transition probabilities. For example, the approximating linear systems for non-stationary birth-death processes should ideally be tri-diagonal, just as are the linear systems arising in the setting of stationary birth-death processes. As far as we are aware, there is no such general approach presently available. In this paper, we provide such a set of approximations. Our approximations are valid precisely in the context in which one would presume that an approximation by a model with stationary dynamics should be valid, namely a regime in which the transition probabilities change slowly over time (or are ``slowly changing"). 

In this slowly changing setting, we show how various performance measures for Markov chains $X = (X_{n}: n\ge0)$ with non-stationary transition probabilities can be approximated by corresponding calculations involving stationary transition probabilities. In Section 2, we start by illustrating this approach in the setting of expected infinite horizon discounted reward for finite state Markov chains. Section 3 extends this to calculations involving the expected cumulative reward to the hitting time of a set, such as those arising in dependability modeling and actuarial risk calculation. Section 4 is concerned with two different approximations for transient expectations of the form $\E r(X_{n})$, one involving ``Taylor expanding" in terms of the transition matrix associated with the first step over the horizon $[0,n]$, while the second expands in terms of the last step $n$ associated with the horizon $[0,n]$. The second approach coincides, when specialized to the uniform acceleration (UA) asymptotic regime, to a discrete-time version of the first term in the UA asymptotic expansion of \cite{massey1998uniform}; see also \cite{khashinskii1996asymptotic}. This section also shows how the theory extends to continuous state space discrete time uniformly ergodic Markov chains. 

In Section 5, we develop an approximation for expected cumulative reward over $[0,n]$, that takes into account the influence of the initial distribution. Our last theory section, Section 6, is concerned with showing how the theory extends to Markov jump processes, and derives an approximation for $\E r(X(t))$, when $X = (X(t): t\ge0)$ is a finite state Markov jump process with a slowly changing family of rate matrices $(Q(t):t\ge 0)$. When specialized to the UA setting, this coincides precisely with the UA approximation of \cite{khashinskii1996asymptotic} and \cite{massey1998uniform}. However, our derivation makes clearer that the specific time acceleration associated with the UA expansion is not required for the approximation to be valid. Rather, the only requirement needed for the validity of this approximation is the slow variation of order $\epsilon$ in the rate matrices $Q(s)$ for value of $s$ within $(\log(1/\epsilon))^2$ of $t$. In particular, the rate matrices can vary rapidly prior to such times $s$ without affecting the validity of the approximation. Section 7 concludes the paper with a brief numerical study of these approximations. Throughout the paper, we make an effort to relate the asymptotic theory, involving a parameter $\epsilon$ being sent to 0, to modeling settings in which a given Markov chain (with no asymptotic parameter) needs to be approximated, and make specific proposals for how the approximation can be implemented. 

A distinguishing characteristic of this work is the development of an approximation theory for generic un-structured Markov chains and processes. In the present of specific models, one can develop model-specific approximations that can provide effective numerical and analytical approximations to non-stationary versions of such models. For instance, the work done by \cite{massey1985asymptotic} for the non-stationary $M/M/1$ queue is in this spirit, as is the limit theory on non-stationary reflected Brownian motion developed by \cite{mandelbaum1995strong}. The work done by \cite{whitt1991pointwise} establishes the asymptotic correctness of the pointwise stationary approximation for $M_t/M_t/s$ queues by assuming local stationarity. There is also a substantial literature on exact analysis of infinite-server queues that establishes that non-stationarity does not seriously complicate such models relative to the stationary case. Finally, there is a significant and growing body of contributions on closure approximations for queues that shows promise of generating efficient numerical algorithms for analysis of such systems; see \cite{massey2013gaussian}, \cite{pender2014gram,pender2014poisson,pender2015nonstationary}. A recent survey of this work is provided by \cite{whitt2017queues}. As noted above, the current paper has a different focus, namely that of developing approximations for generic Markov models.

\section{Approximating Expected Infinite Horizon Discounted Reward}
Let $X=(X_k:k \ge 0)$ be a finite state $S$-valued Markov chain. For each $x\in S$, suppose that $r(x)$ is the reward obtained for spending one unit of time in state $x$. In this section, we are concerned with the expected infinite horizon discounted reward defined by 
\begin{equation}
\kappa = \E \sum_{j=0}^{\infty}e^{-\alpha j}r(X_j)
\end{equation}
for a given (per period) discount rate $\alpha>0$. The Markov property implies the existence of a sequence $(P_k:k\ge 1)$ of stochastic matrices for which 
\[
P(X_{k+1}=y\mid X_0,\ldots,X_k) = P_{k+1}(X_k,y) \quad a.s.
\]
for $k\ge 0$. Put $\mu(x) = P(X_0=x)$ for $x\in S$. We adopt the convention that all probability mass functions on $S$ are encoded as row vectors, and all real-valued functions with domain $S$ are encoded as column vectors. We further adopt the convention that in using the product notation $\prod_{i=k}^{m} A_i$ for a product of square matrices $A_k,\cdots,A_m$, we always multiply them in increasing order of their indices or time arguments, so that (for example) $\prod_{i=k}^{m} A_i = A_k\cdots A_m$ and $\prod_{i=k}^{m} A_{n-i} = A_{n-m}\cdots A_{n-k}$, and a product over an empty set of indices equals 1.  With this convention in hand, it is evident that 
\begin{equation}\label{eq:2.2}
\kappa = \sum_{j=0}^{\infty}e^{-\alpha j}\mu\left(\prod_{k=1}^{j}P_k\right)r.
\end{equation}

It is well known that if $X$ has stationary transition probabilities (so that $P_k = P_1$ for $k\ge 1$), then $\kappa = \mu \nu$, where $\nu$ satisfies 
\begin{equation}\label{eq:2.3}
\nu = r+ e^{-\alpha}P_1\nu.
\end{equation}
The only finite-valued solution of (\ref{eq:2.3}) is then given by $\nu = 	\sum_{j=0}^{\infty}e^{-\alpha j}P_1^j = (I-e^{-\alpha}P_1)^{-1}r$; see \cite{kemeny1960finite}. Given that (\ref{eq:2.3}) can be solved by Gaussian elimination, $\nu$ (and hence $\kappa$) can be computed in $O(|S|^3)$ arithmetic (or floating point) operations; see \cite{farebrother1988linear}. (Here, we use the notation $O(g(|S|,\theta,n,\alpha,\epsilon))$) to denote a function that is bounded by a multiple of $g(|S|,\theta,n,\alpha,\epsilon)$ and and the notation $o(g(\epsilon))$ to denote a function for which $o(g(\epsilon))/g(\epsilon)\rightarrow0$ as $\epsilon\downarrow0$.) Alternatively, one can sometimes analytically calculate the solution to (\ref{eq:2.3}) in closed form. For example, such a closed form is always available for birth-death chains, because the linear system (\ref{eq:2.3}) is then tri-diagonal (so that the only non-zeros entries in the coefficient matrix appear on the diagonal, super-diagonal, and sub-diagonal).

We will now show how one can compute approximations to (\ref{eq:2.2}) in the non-stationary setting. These approximations involve linear systems of equations with coefficient matrices identical to those arising in (\ref{eq:2.3}). Consequently, these approximations are typically no harder to compute, either analytically or numerically, than are the linear systems associated with the stationary case. 

In preparation for stating our main result, we define the following norms on row vectors $\eta$, column vectors $f$, and square matrices $A$: 
\begin{align*}
\|\eta\| &= \sum_x |\eta(x)|;\\
\|f\| &= \max_x |f(x)|;\\
\|A\| &= \max_x \sum_y |A(x,y)|.
\end{align*}
We recall that $|\eta f| \le \|\eta\| \cdot \|f\|$, $\|\eta A\| \le \|\eta\| \cdot \|A\|$, $\|Af\| \le \|A\|\cdot \|f\|$, and $\|A_1A_2\|\le \|A_1\|\cdot \|A_2\|$ for matrices $A_1$ and $A_2$; see, for example, \cite{golub2012matrix}. We will need the following basic result. 
\begin{proposition}\label{prop:1}
	Suppose that $\| A^n \| < 1$ for some $n\ge 1$. Then:
	\begin{itemize}
		\item[i.)] $\|A^n \|\rightarrow 0$ geometrically fast as $n\rightarrow\infty$, and $\sum_{n=0}^{\infty} A^n = (I-A)^{-1}$;
		\item[ii.)]for $j\ge 1$, 
		\[
		\sum_{n=0}^{\infty}(n+j)(n+j-1)\cdots(n+1) A^n = j!(I-A)^{-j-1}.
		\]
	\end{itemize}
\end{proposition}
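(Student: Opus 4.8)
The plan is to handle the two parts in sequence, using part i.) directly as the base case for an induction in part ii.).

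For part i.), the key subtlety is that $\|A\|$ itself may well exceed $1$ even though $\|A^m\| < 1$ for some fixed $m \ge 1$, so geometric decay of $\|A^k\|$ cannot be read off from submultiplicativity alone. Instead I would write each index $k$ via the division algorithm as $k = qm + s$ with $0 \le s < m$, and bound
\[
\|A^k\| = \|(A^m)^q A^s\| \le \|A^m\|^q \max_{0 \le i < m}\|A^i\|.
\]
Since $\|A^m\| < 1$ and $q \to \infty$ as $k \to \infty$, this gives $\|A^k\| \le C\rho^k$ for constants $C < \infty$ and $\rho \in (0,1)$ (one may take $\rho = \|A^m\|^{1/m}$), which is the asserted geometric decay and in particular makes $\sum_k \|A^k\| < \infty$. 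The partial sums $S_N = \sum_{k=0}^N A^k$ are then Cauchy in norm, hence converge to some matrix $B$; the telescoping identity $(I-A)S_N = S_N(I-A) = I - A^{N+1}$ together with $A^{N+1} \to 0$ then yields $(I-A)B = B(I-A) = I$, so $B = (I-A)^{-1}$.

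For part ii.), I would first note that $(n+j)(n+j-1)\cdots(n+1) = j!\binom{n+j}{j}$, so the claim is equivalent to $B_j := \sum_{n=0}^\infty \binom{n+j}{j} A^n = (I-A)^{-j-1}$. The geometric bound from part i.) shows that for each fixed $j$ the coefficients $\binom{n+j}{j}$ grow only polynomially in $n$, whence $\sum_n \binom{n+j}{j}\|A^n\| < \infty$, so $B_j$ is well defined and all reindexings below are justified. I would argue by induction on $j$, the case $j = 0$ being exactly part i.). For the inductive step I would compute $(I-A)B_{j+1}$ by reindexing the subtracted series and invoking Pascal's rule $\binom{n+j+1}{j+1} - \binom{n+j}{j+1} = \binom{n+j}{j}$, which collapses the telescoped sum to $B_j$; thus $(I-A)B_{j+1} = B_j$, and multiplying on the left by $(I-A)^{-1}$ and using the inductive hypothesis gives $B_{j+1} = (I-A)^{-j-2}$. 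An alternative I would keep in reserve is to recognize $B_j$ as the $(j+1)$-fold Cauchy product of the Neumann series $\sum_n A^n$ with itself, whose $A^n$-coefficient counts the nonnegative solutions of $n = n_0 + \cdots + n_j$ and hence equals $\binom{n+j}{j}$; this is legitimate precisely because all the factors are powers of the single matrix $A$ and therefore commute.

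The main obstacle is really confined to part i.): recognizing that submultiplicativity alone is insufficient, and that the division-algorithm decomposition of $k$ is what extracts geometric decay from the single hypothesis $\|A^m\| < 1$. Once geometric decay is in hand, everything else—convergence of the Neumann series, the telescoping identity characterizing the inverse, and the polynomial-coefficient series in part ii.)—follows by routine absolutely convergent manipulations, with Pascal's rule (or equivalently the Cauchy-product combinatorics) supplying the only additional algebraic identity.
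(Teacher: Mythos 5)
Your proof is correct, but it follows a different (and more self-contained) route than the paper. The paper dispatches part i.) by citation to Kemeny and Snell, whereas you prove it from scratch via the division-algorithm decomposition $k = qm+s$ — a genuine argument that correctly isolates the subtlety that $\|A\|$ itself need not be less than $1$. For part ii.), the paper's entire proof is the observation that $(I-A)^{-j-1} = \left(\sum_{n=0}^{\infty} A^n\right)^{j+1}$ is a binomial series with negative exponent, whose $A^n$-coefficient is $\binom{n+j}{j}$; that is precisely the Cauchy-product argument you keep ``in reserve,'' while your primary argument instead runs an induction on $j$ using Pascal's rule $\binom{n+j+1}{j+1} - \binom{n+j}{j+1} = \binom{n+j}{j}$ to collapse $(I-A)B_{j+1}$ to $B_j$. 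The two routes are essentially equivalent in content (Pascal's rule is the recursive shadow of the multinomial count), but yours trades the slick one-line identification for a step-by-step verification whose convergence and reindexing justifications are made explicit — the paper's version is faster to state, yours is easier to check without outside references, and your absolute-convergence bound $\binom{n+j}{j}\|A^n\| \le C (n+j)^j \rho^n$ is exactly the justification the paper leaves implicit.
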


\begin{proof}{Proof of Proposition \ref{prop:1}.}
	Part $i.)$ is well known; see \cite{kemeny1960finite}. Part $ii.)$ is yielded by noting that $(I-A)^{-j-1} = (\sum_{n=0}^{\infty}A^n)^{j+1}$ is a Binomial series with a negative exponent and equals $\sum_{n=0}^{\infty} {n+j\choose j} A^n $.
\end{proof}



Because we wish to develop asymptotic approximations, we now consider a parameterized family of infinite horizon discounted rewards defined by 
\[
\kappa(\epsilon) = \sum_{j=0}^{\infty}e^{-\alpha j}\mu \left(\prod_{k=1}^{j}P_k(\epsilon)\right) r,
\]
where the $P_k(\epsilon)$'s are stochastic matrices. We now make the following assumption about the family $(P_k(\epsilon): k \ge 1)$.

\begin{assumption}~\label{assume:1}
	Suppose that $(P_k(\epsilon): k \ge1 ,\epsilon >0)$ is a family of stochastic matrices for which there exist scalars $(a_{i1}:i\ge 1)$, matrices $\tilde{P}$, $\tilde{P}^{(1)}$, and scalars $s$, $\delta$, and $p>0$ such that
	\begin{itemize}
		\item[i)] $\sup_{1\le i\le (\log(1/\epsilon))^{1+\delta}} \| P_i(\epsilon) - (\tilde{P}+\epsilon a_{i1}\tilde{P}^{(1)}) \| = O(\epsilon^2 (\log(1/\epsilon))^s)$  as $\epsilon\downarrow 0 $;
		\item[ii)] $|a_{i1}| = O(i^p)$ as $i\rightarrow\infty$. 
	\end{itemize}
\end{assumption}

\begin{theorem}\label{thm:1}
	Suppose that Assumption \ref{assume:1} holds. Then, there exists $w<\infty$ such that 
	\begin{equation}
	\kappa(\epsilon) = \mu(I-e^{-\alpha}\tilde{{P}})^{-1}r + \epsilon e^{-\alpha} \mu \sum_{k=0}^{\infty} a_{k+1,1} e^{-\alpha k} \tilde{P}^k \tilde{P}^{(1)} (I-e^{-\alpha}\tilde{P})^{-1}r + O(\epsilon^2 (\log(1/\epsilon))^w)\label{eq:1new}
	\end{equation}
	as $\epsilon\downarrow 0 $.
\end{theorem}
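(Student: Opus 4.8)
The plan is to expand the finite matrix products $\prod_{k=1}^{j}P_k(\epsilon)$ to first order in $\epsilon$, but only after truncating the outer sum over $j$ at the threshold $N_\epsilon=\lceil(\log(1/\epsilon))^{1+\delta}\rceil$, beyond which Assumption \ref{assume:1}(i) is silent. Because every $P_k(\epsilon)$ is stochastic we have $\|\prod_{k=1}^{j}P_k(\epsilon)\|\le 1$, and $\tilde P$ is stochastic so $\|\tilde P\|=1$; hence the discarded tail $\sum_{j>N_\epsilon}e^{-\alpha j}\mu(\prod_{k=1}^{j}P_k(\epsilon))r$ is bounded by $\|\mu\|\,\|r\|\sum_{j>N_\epsilon}e^{-\alpha j}=O(e^{-\alpha N_\epsilon})$, which is $o(\epsilon^M)$ for every $M$ since $e^{-\alpha N_\epsilon}=\epsilon^{\alpha(\log(1/\epsilon))^{\delta}}$. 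The same estimate disposes of the tails of the two limiting series, so it suffices to match $\sum_{j\le N_\epsilon}$ of the product against the truncations of $Z=\mu(I-e^{-\alpha}\tilde P)^{-1}r$ and of the stated first-order term.

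Writing $E_k=P_k(\epsilon)-\tilde P$ and using the multilinear expansion
$$\prod_{k=1}^{j}P_k(\epsilon)=\sum_{m=0}^{j}\ \sum_{1\le i_1<\cdots<i_m\le j}\tilde P^{\,i_1-1}E_{i_1}\tilde P^{\,i_2-i_1-1}\cdots E_{i_m}\tilde P^{\,j-i_m},$$
the $m=0$ term is $\tilde P^{j}$ and the $m=1$ term is $\sum_{i=1}^{j}\tilde P^{\,i-1}E_i\tilde P^{\,j-i}$. Substituting $E_i=\epsilon a_{i1}\tilde P^{(1)}+R_i(\epsilon)$ with $\|R_i(\epsilon)\|=O(\epsilon^2(\log(1/\epsilon))^s)$ (Assumption \ref{assume:1}(i)) splits the $m=1$ term into the genuine first-order piece $\epsilon\sum_{i=1}^{j}a_{i1}\tilde P^{\,i-1}\tilde P^{(1)}\tilde P^{\,j-i}$ plus an $R$-error. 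Sandwiching the genuine piece between $\mu$ and $r$, weighting by $e^{-\alpha j}$, summing over all $j$, and reindexing via $k=i-1$ and $m=j-i$ collapses it (using Proposition \ref{prop:1}(i) on $\sum_m e^{-\alpha m}\tilde P^m$) into exactly $\epsilon e^{-\alpha}\mu\sum_{k\ge0}a_{k+1,1}e^{-\alpha k}\tilde P^{k}\tilde P^{(1)}(I-e^{-\alpha}\tilde P)^{-1}r$; this series converges absolutely because $|a_{k+1,1}|=O(k^p)$ by Assumption \ref{assume:1}(ii) while $e^{-\alpha k}$ decays geometrically.

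It then remains to show that all error terms contribute $O(\epsilon^2(\log(1/\epsilon))^w)$. For $i\le N_\epsilon$, Assumption \ref{assume:1}(ii) gives $\|E_i\|\le\epsilon|a_{i1}|\,\|\tilde P^{(1)}\|+\|R_i(\epsilon)\|\le\beta(\epsilon)$ for a bound of the form $\beta(\epsilon)=O(\epsilon(\log(1/\epsilon))^{p(1+\delta)})$, so that $N_\epsilon\beta(\epsilon)=O(\epsilon(\log(1/\epsilon))^{(1+p)(1+\delta)})\to0$. Using $\|\tilde P\|=1$, the $R$-error in the $m=1$ term is at most $j\cdot O(\epsilon^2(\log(1/\epsilon))^s)$, while the terms with $m\ge2$ are bounded by $\sum_{m\ge2}\binom{j}{m}\beta(\epsilon)^m\le\sum_{m\ge2}(j\beta(\epsilon))^m/m!\le\tfrac{1}{2}(j\beta(\epsilon))^2e^{j\beta(\epsilon)}\le(j\beta(\epsilon))^2$ once $\epsilon$ is small. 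Weighting by $e^{-\alpha j}$ and summing over $j\le N_\epsilon$ (the sums $\sum_j e^{-\alpha j}j$ and $\sum_j e^{-\alpha j}j^2$ being finite) yields an overall error $O(\epsilon^2(\log(1/\epsilon))^s)+O(\beta(\epsilon)^2)=O(\epsilon^2(\log(1/\epsilon))^w)$ with $w=\max\{s,2p(1+\delta)\}$, after absorbing the super-polynomially small tails from the first paragraph.

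The main obstacle is the tension isolated in the last paragraph: the good bound of Assumption \ref{assume:1}(i) holds only up to index $(\log(1/\epsilon))^{1+\delta}$, and the sizes $|a_{i1}|$ may grow polynomially in $i$, so $\|E_i\|$ carries polylogarithmic factors once $i$ approaches the truncation level. One must therefore check that these factors, amplified quadratically by the $m\ge2$ terms, remain dominated by $\epsilon^2$ up to a power of $\log(1/\epsilon)$ — which works precisely because the truncation level is a power of $\log(1/\epsilon)$ rather than a power of $1/\epsilon$, and because the geometric discount $e^{-\alpha j}$ keeps the weighted sums over $j$ bounded. Verifying $N_\epsilon\beta(\epsilon)\to0$ and tracking the exponent $w$ through these estimates is the crux; the algebraic expansion and the reindexing are routine.
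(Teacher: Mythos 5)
Your proof is correct, and its outer architecture coincides with the paper's: truncate the discounted sum at the polylogarithmic horizon $(\log(1/\epsilon))^{1+\delta}$, discard the three tails (true series, stationary series, correction series) as super-polynomially small via the discount factor and the polynomial bound $|a_{i1}|=O(i^p)$, match the truncated sums through a first-order expansion of the matrix products, and finally interchange the order of summation to collapse the correction into $(I-e^{-\alpha}\tilde P)^{-1}$ form. Where you genuinely depart from the paper is in how the central product-expansion estimate is proved. The paper isolates it as Proposition~\ref{prop:2} and argues by a one-factor-at-a-time recursion: with $f_m$ the expansion error after $m$ factors, it derives $f_{m+1}\le c\,f_m + d''$ and unrolls, the crux being that $c^{(\log(1/\epsilon))^{1+\delta}}\to 1$. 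You instead expand $\prod_k(\tilde P+E_k)$ exactly as a multilinear sum indexed by which factors contribute an $E_k$, read off the $m=0$ and $m=1$ terms, and bound all terms with $m\ge 2$ by $\sum_{m\ge2}\binom{j}{m}\beta(\epsilon)^m\le (j\beta(\epsilon))^2$ once $j\beta(\epsilon)$ is small; your condition $N_\epsilon\beta(\epsilon)\to 0$ plays exactly the role of the paper's $c^{N_\epsilon}\to 1$, and both exploit that the horizon is polylogarithmic rather than polynomial in $1/\epsilon$. Your route buys a transparent separation of the two error sources (the per-matrix remainders $R_i$ versus products of two or more perturbations), per-$j$ error bounds that make the final weighted summation immediate, and a mechanical path to higher-order refinements (keeping the $m=2$ terms yields the paper's Theorem 2); the paper's recursion buys reusability, since the same inequality and its conclusion are invoked again in the proofs of Theorems \ref{thm:3}, \ref{thm:4}, \ref{thm:5}, \ref{thm:8}, and \ref{thm:9}. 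One detail you assert without justification: that $\tilde P$ is stochastic, which you need for $\|\tilde P\|=1$ throughout your bounds; as the paper notes, this follows by letting $\epsilon\downarrow 0$ in Assumption~\ref{assume:1}(i), so that $\tilde P$ is an entrywise limit of stochastic matrices.
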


We note that Assumption \ref{assume:1} asserts that the $P_k(\epsilon)$'s are ``slowly changing" over a time scale of order $(\log(1/\epsilon))^{1+\delta}$ (so that we require slow variation only over a small portion of the entire horizon). In the presence of such an assumption, Theorem 1 provides a first-order correction to the stationary formula $\mu (I-e^{-\alpha}\tilde{P})^{-1}r$ that reflects the non-stationary dynamics of the Markov chain. 

One possible choice for $(P_k(\epsilon):k \ge 1, \epsilon >0)$ is one in which $P_k(\epsilon) = P(\epsilon)$, where $P(\cdot)$ is twice continuously differentiable in a neighborhood of 0. In this case, Assumption \ref{assume:1} holds with $a_{i1} = 1$, $\tilde{P} = P(0)$, $\tilde{P}^{(1)} = P^{(1)}(0)$ (when $P^{(j)}(\theta)$ is the $j$'th derivative of $P(\cdot)$ evaluated at $\theta$), and $s=0$, in which case 
\begin{align*}
\kappa(\epsilon) = \mu (I-e^{-\alpha}\tilde{P})^{-1}r + \epsilon e^{-\alpha}\mu(I-e^{-\alpha}\tilde{P})^{-1}\tilde{P}^{(1)}(I-e^{-\alpha}\tilde{P})^{-1}r + O(\epsilon^2(\log(1/\epsilon))^w)
\end{align*}
as $w\downarrow 0$. Of course, in this setting, the $P_k(\epsilon)$'s describe a Markov chain with stationary transition probabilities, so our formula (\ref{eq:1new}) is then just computing the sensitivity of $\kappa(\epsilon)$ to a perturbation in the common one-step transition matrix. 

A genuinely non-stationary example arises when $P_k(\epsilon) = P((k-1)\epsilon)$, where $P(\cdot)$ is again twice continuously differentiable in a neighborhood of the origin. (Note that Assumption \ref{assume:1} governs only the first $(\log(1/\epsilon))^{1+\delta}$) transitions, thereby corresponding to the behavior of $P(\cdot)$ over $[0,\epsilon(\log(1/\epsilon))^{1+\delta}]$, so that only $P(\cdot)$'s behavior near 0 plays a role.) In this case, $a_{i1}=(i-1)$, $\tilde{P} = P(0)$, $\tilde{P}^{(1)} = P^{(1)}(0)$, and $s = 2(1+\delta)$. In view of part ii) of Proposition 1 with $j = 1$, formula (\ref{eq:1new}) then takes the form 
\begin{equation}
\kappa(\epsilon) = \mu(I-e^{-\alpha}\tilde{P})^{-1}r + \epsilon e^{-2\alpha}\mu \tilde{P}(I-e^{-\alpha}\tilde{P})^{-2}\tilde{P}^{(1)}(I-e^{-\alpha}\tilde{P})^{-1}r + O(\epsilon^2(\log(1/\epsilon))^w) \label{eq:1anew}
\end{equation} 
as as $\epsilon\downarrow 0 $. Thus, with this parametrization, the infinite sum in the first-order approximation to $\kappa(\epsilon)$ that accounts for the non-stationarity collapses to a quantity involving $(I-e^{-\alpha}\tilde{P})^{-1}.$ In fact, 
\[
\kappa(\epsilon) = \mu \nu + \epsilon e^{-2\alpha} \mu\tilde{P}\nu_2 + O(\epsilon^2 (\log(1/\epsilon))^w)
\]
as $\epsilon\downarrow 0$, where $\nu$, $\nu_1$, and $\nu_2$ satisfy the linear systems 
\begin{align*}
&(I-e^{-\alpha}\tilde{P})\nu  = r,\\
&(I-e^{-\alpha}\tilde{P})\nu_1 = \tilde{P}^{(1)}\nu,\\
&(I-e^{-\alpha}\tilde{P})\nu_2 = \nu_1.
\end{align*}
As proposed earlier, the coefficient matrices are all identical and equal to the coefficient matrix associated with computing infinite horizon discounted reward in the stationary dynamics setting (where $P_k(\epsilon) \equiv \tilde{P}$).

We now turn to the proof of Theorem 1. The key estimate is provided by the next result. 

\begin{proposition}\label{prop:2}
	Under Assumption \ref{assume:1}, there exists $w>0$ such that 
	\begin{equation*}
	\sup_{1\le m\le (\log(1/\epsilon))^{1+\delta}} \left\| P_1(\epsilon)\cdots P_m(\epsilon) - \left(\tilde{P}^m+\epsilon \sum_{i=1}^{m} a _{i1}\tilde{P}^{i-1}\tilde{P}^{(1)}\tilde{P}^{m-i}\right) \right\| = O(\epsilon^2(\log(1/\epsilon))^w)
	\end{equation*}
	as $\epsilon\downarrow0.$
\end{proposition}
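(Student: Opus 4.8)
The plan is to regard the product $P_1(\epsilon)\cdots P_m(\epsilon)$ as a perturbation of $\tilde P^m$ and to carry out a first-order expansion in the per-factor perturbations, isolating exactly the linear term in the statement and showing that everything else is $O(\epsilon^2(\log(1/\epsilon))^w)$ uniformly over $1\le m\le M$, where $M := (\log(1/\epsilon))^{1+\delta}$. The one structural fact I would record first is that $\tilde P$ is stochastic: since each $P_i(\epsilon)$ is stochastic and, by Assumption \ref{assume:1}(i), $P_i(\epsilon)\to\tilde P$ entrywise as $\epsilon\downarrow 0$, the limit $\tilde P$ has nonnegative entries and unit row sums, whence $\tilde P^k$ is stochastic and $\|\tilde P^k\| = 1$ for every $k\ge 0$. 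This is what keeps the interspersed powers of $\tilde P$ from inflating any error bound.

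Next I would set $B_i := P_i(\epsilon) - \tilde P$ and bound it uniformly. By the triangle inequality, Assumption \ref{assume:1}(i), and $|a_{i1}| = O(i^p)$ from (ii), one has, uniformly in $1\le i\le M$,
\[
\|B_i\| \le \|P_i(\epsilon) - (\tilde P + \epsilon a_{i1}\tilde P^{(1)})\| + \epsilon|a_{i1}|\,\|\tilde P^{(1)}\| = O(\epsilon^2(\log(1/\epsilon))^s) + O(\epsilon M^p),
\]
so that $\beta := \max_{1\le i\le M}\|B_i\| = O(\epsilon(\log(1/\epsilon))^{(1+\delta)p})$. Expanding $\prod_{i=1}^m (\tilde P + B_i)$ over the $2^m$ choices of $\tilde P$ versus $B_i$ in each factor and grouping by the set $S$ of positions carrying a $B_i$ gives
\[
\prod_{i=1}^m P_i(\epsilon) = \tilde P^m + \sum_{i=1}^m \tilde P^{i-1}B_i\tilde P^{m-i} + R,
\]
where $R$ collects all terms with $|S|\ge 2$.

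The linear term I would further split using $B_i = \epsilon a_{i1}\tilde P^{(1)} + E_i$, where $\|E_i\| = O(\epsilon^2(\log(1/\epsilon))^s)$ by Assumption \ref{assume:1}(i): the part $\epsilon\sum_i a_{i1}\tilde P^{i-1}\tilde P^{(1)}\tilde P^{m-i}$ is exactly the claimed term, while $\big\|\sum_i \tilde P^{i-1}E_i\tilde P^{m-i}\big\|\le \sum_{i=1}^m\|E_i\| = O(M\epsilon^2(\log(1/\epsilon))^s) = O(\epsilon^2(\log(1/\epsilon))^{s+1+\delta})$, again using $\|\tilde P^k\|=1$. For the remainder $R$, each term with exactly $k$ factors of $B$ has norm at most $\beta^k$ (the intervening powers of $\tilde P$ contribute factors of $1$ by submultiplicativity), and there are $\binom mk$ of them, so $\|R\|\le \sum_{k=2}^m\binom mk\beta^k = (1+\beta)^m - 1 - m\beta \le e^{m\beta} - 1 - m\beta \le \tfrac12 (m\beta)^2 e^{m\beta}$.

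The crux, and the step I expect to be the main obstacle, is verifying that this remainder is genuinely of order $\epsilon^2$ up to logarithmic factors, which forces me to use the precise interaction between the horizon length $M$ and the polynomial growth of the $a_{i1}$. Here $m\beta \le M\beta = O(\epsilon(\log(1/\epsilon))^{(1+\delta)(1+p)})\to 0$, so $e^{m\beta}$ is bounded (by $2$, say, for all small $\epsilon$) and $\|R\| = O((M\beta)^2) = O(\epsilon^2(\log(1/\epsilon))^{2(1+\delta)(1+p)})$. This is exactly where the slow-variation time scale $(\log(1/\epsilon))^{1+\delta}$ pays off: had $M$ been a fixed power of $1/\epsilon$, the product $M\beta$ would not vanish, and the quadratic-in-$\beta$ control of $R$ would fail. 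Collecting the three contributions and taking $w := \max\{s+1+\delta,\ 2(1+\delta)(1+p)\}$ yields the claimed bound $O(\epsilon^2(\log(1/\epsilon))^w)$ uniformly over $1\le m\le M$.
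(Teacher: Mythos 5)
Your proof is correct, and it takes a genuinely different route from the paper's. The paper argues by induction on the number of factors: it defines $f_m$ to be exactly the norm you are bounding, peels off one factor at a time to get the one-step recursion $f_{m+1}\le c\,f_m + d''$ (inequality (\ref{eq:A5})) with $c = 1+O(\epsilon(\log(1/\epsilon))^{p(1+\delta)})$ and $d'' = O(\epsilon^2(\log(1/\epsilon))^{w})$, and then solves the recursion, using that $c^{(\log(1/\epsilon))^{1+\delta}}\to 1$ as $\epsilon\downarrow 0$. You instead expand $\prod_{i=1}^m(\tilde P + B_i)$ globally over all $2^m$ assignments, identify the empty and singleton index sets with the main term and the first-order term (further splitting $B_i = \epsilon a_{i1}\tilde P^{(1)}+E_i$, which the paper's recursion handles implicitly), and control everything with at least two perturbation factors by the combinatorial bound $\sum_{k\ge 2}\binom{m}{k}\beta^k \le \tfrac12 (m\beta)^2 e^{m\beta}$. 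Both arguments turn on the same two facts, which you isolate cleanly: stochasticity of $\tilde P$ (so interspersed powers of $\tilde P$ have norm one; the paper establishes this exactly as you do) and the smallness $m\beta = O(\epsilon(\log(1/\epsilon))^{(1+\delta)(1+p)})\to 0$, which is where the logarithmic horizon is indispensable in both proofs. Your expansion is the more transparent of the two — it shows precisely what the $O(\epsilon^2)$ remainder consists of, and it sidesteps solving a recursion (where the paper's stated bound $f_m\le d''(1+c^m)$ is in fact slightly off, missing a harmless factor of $m$ that only shifts the polylog exponent). The paper's recursion, on the other hand, is set up so that inequality (\ref{eq:A5}) can be re-instantiated verbatim with different horizon and perturbation scales in the proofs of Theorem \ref{thm:4} and Theorem \ref{thm:9}; your expansion would serve the same purpose but would need to be restated in each setting.
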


\begin{proof}{Proof of Proposition \ref{prop:2}.}
For $\epsilon$ sufficiently small, there exists $d<\infty$ such that
\begin{equation}
\| P_i(\epsilon) - (\tilde{P}+\epsilon a_{i1}\tilde{P}^{(1)}) \| \le d \epsilon^2 (\log(1/\epsilon))^s \label{eq:Ptildedistance}
\end{equation}
and 
\[
|a_{i1} |\le d\, i ^p
\]
for $1\le i \le (\log(1/\epsilon))^{1+\delta}$. Note that $\tilde{P}$ must be stochastic, as can be seen from the fact that the $P_i(\epsilon)$'s are stochastic and sending $\epsilon$ to 0 in (\ref{eq:Ptildedistance}). Recalling that $\|K\| = 1$ for a stochastic matrix and the fact that $\|K_1 K_2\| \le \|K_1\|\|K_2\|$ for arbitrary matrices $K_1$ and $K_2$, it is evident that 
\begin{align}
f_{m+1} & \triangleq \left\| \prod_{i=1}^{m+1}P_i(\epsilon) - \left( \tilde{P}^{m+1}+\epsilon \sum_{i=1}^{m+1}a_{i1}\tilde{P}^{i-1}\tilde{P}^{(1)}\tilde{P}^{m+1-i} \right) \right\|\nonumber\\
&\le \left\|  \left(\prod_{i=1}^{m}P_i(\epsilon) - \left( \tilde{P}^{m}+\epsilon \sum_{i=1}^{m}a_{i1}\tilde{P}^{i-1}\tilde{P}^{(1)}\tilde{P}^{m-i} \right)\right) \tilde{P} \right\| \nonumber \\
& + \left\| \prod_{i=1}^{m}P_i(\epsilon)  \left( {P}_{m+1}(\epsilon) - (\tilde{P} + \epsilon a_{m+1,1}\tilde{P}^{(1)}) \right) \right\| \nonumber\\
& + \left\|\left(\tilde{P}^m + \epsilon \sum_{i=1}^{m} a_{i1} \tilde{P}^{i-1}\tilde{P}^{(1)}\tilde{P}^{m-i} - \prod_{i=1}^{m}P_i(\epsilon) \right)\epsilon a_{m+1,1}\tilde{P}^{(1)} \right\|\nonumber\\
& + \left\| \left(\epsilon \sum_{i=1}^{m}a_{i1} \tilde{P}^{i-1}\tilde{P}^{(1)}\tilde{P}^{m-i}\right)\epsilon a_{m+1,1} \tilde{P}^{(1)} \right\| \nonumber\\
&\le f_m + \epsilon^2 d(\log(1/\epsilon))^s + \epsilon f_m |a_{m+1,1}| \|\tilde{P}^{(1)}\| + \epsilon^2 |a_{m+1,1}|\sum_{i=1}^{m}|a_{i1}| \|\tilde{P}^{(1)}\|^2 \label{eq:A5}
\end{align}
for $1\le m+1\le (\log(1/\epsilon))^{1+\delta}$. But $|a_{m+1,1}| \le d(m+1)^p$, and $\sum_{i=1}^{m}|a_{i1}|\le d'(m+1)^{p+1} \le d' (\log(1/\epsilon))^{(p+1)(1+\delta)}$ for some $d'<\infty$ when $m+1 \le (\log(1/\epsilon))^{1+\delta}$. So, 
\begin{align*}
f_{m+1} &\le f_m(1+ \epsilon d(\log(1/\epsilon))^{p(1+\delta)}\|\tilde{P}^{(1)}\|) + \epsilon^2\left[d(\log(1/\epsilon))^s + dd'(\log(1/\epsilon))^{p(1+\delta) + (p+1)(1+\delta)} \|\tilde{P}^{(1)}\|^2\right]\\
&\triangleq f_m c + d'',
\end{align*}
for which it follows that 
\begin{equation}
f_m \le d''(1+c^m) \le 2 d'' c^m \le 2 d'' c^{(\log(1/\epsilon))^{1+\delta}}
\label{eq:6.4}
\end{equation}
for $1\le m\le (\log(1/\epsilon))^{1+\delta}$. Note that $d'' = d''(\epsilon) = O(\epsilon^2(\log(1/\epsilon)))^{w}$ for some $w > 0$ as $\epsilon\downarrow0$, and $1\le c(\epsilon)\le 1+ \epsilon/2$ for $\epsilon$ sufficiently small, so that 
\begin{equation*}
(1+c(\epsilon))^{(\log(1/\epsilon))^{1+\delta}}\rightarrow 1
\end{equation*}
as $\epsilon\downarrow 0.$ Consequently, 
\[
f_m = f_m(\epsilon) = O(\epsilon^2(\log(1/\epsilon))^w),
\]
uniformly in $1\le m \le (\log(1/\epsilon))^{1+\delta}$, proving the result.
\end{proof}

\begin{proof}{Proof of Theorem \ref{thm:1}.} 
Note that 
\begin{align}
& \left\| \sum_{j >(\log(1/\epsilon))^{1+\delta}} e^{-\alpha j} \mu \prod_{k=1}^{j}P_k(\epsilon)r \right\| = O(\epsilon^2), \label{eq:2new}\\
& \left\| \sum_{j >(\log(1/\epsilon))^{1+\delta}} e^{-\alpha j} \mu \tilde{P}^jr \right \| = O(\epsilon^2), \label{eq:3new}\\
& \left\|\sum_{j >(\log(1/\epsilon))^{1+\delta}} e^{-\alpha j} \mu \prod_{k=1}^{j} a_{k1}\tilde{P}^{k-1}\tilde{P}^{(1)}\tilde{P}^{j-k} \right\| = O(\epsilon^2) \label{eq:4new}
\end{align}
as $\epsilon\downarrow 0$. On the other hand, Proposition 2 shows that 
\begin{align}
&\left\| \sum_{j\le (\log(1/\epsilon))^{1+\delta}} e^{-\alpha j}\mu \prod_{k=1}^{j}P_k(\epsilon)r - \sum_{j\le (\log(1/\epsilon))^{1+\delta}}e^{-\alpha j}\mu\left(\tilde{P}^j + \epsilon \sum_{k=1}^{j}a_{k1}\tilde{P}^{k-1}\tilde{P}^{(1)}\tilde{P}^{j-k}\right)r \right \| \nonumber\\
&\le \sum_{j\le (\log(1/\epsilon))^{1+\delta}} e^{-\alpha j} O(\epsilon^2(\log(1/\epsilon))^w) \nonumber\\
&= O(\epsilon^2(\log(1/\epsilon))^w) \label{eq:5new}.
\end{align}
as $\epsilon\downarrow 0.$ Relations (\ref{eq:2new}) through (\ref{eq:5new}) imply that 
\begin{align}
\kappa(\epsilon) =\,&  \mu\sum_{j=0}^{\infty} e^{-\alpha j} \tilde{P}^j r 	+ \epsilon \mu \sum_{j=1}^{\infty}e^{-\alpha j} \sum_{k=1}^{j} a_{k1}\tilde{P}^{k-1}\tilde{P}^{(1)}\tilde{P}^{j-k}r  + O(\epsilon^2(\log(1/\epsilon))^w) \label{eq:6new}
\end{align}
as $\epsilon\downarrow0$. But the second term on the right-hand side of (\ref{eq:6new}) equals
\[
\epsilon \mu \sum_{k=1}^{\infty} a_{k1}  \tilde{P}^{k-1} \tilde{P}^{(1)} e^{-\alpha k} \sum_{l=k}^{\infty}e^{-\alpha l} \tilde{P}^l r =   \epsilon \mu \sum_{k=1}^{\infty} a_{k1}  \tilde{P}^{k-1} \tilde{P}^{(1)}e^{-\alpha k} (I-e^{-\alpha}\tilde{P})^{-1}r,
\]
proving the theorem.
\end{proof}

We now discuss how the approximation can be applied in the setting of a Markov chain having dynamics governed by the sequence of transition matrices $(P_j: j\ge 1)$. If 
\begin{equation}
\max_{1\le j\le b/\alpha} \|P_j-P_1\| \label{eq:6anew}
\end{equation}
is small for some value of $b$ that is large, then (\ref{eq:1new}) suggests the approximation 
\[
\mu(I-e^{-\alpha}P_1)^{-1} + \sum_{j=1}^{\lfloor b/\alpha \rfloor} e^{-\alpha j} \sum_{k=1}^{j} P_1^{k-1}(P_k-P_1)P_1^{j-k},
\]
where $\lfloor x\rfloor$ denotes the floor of $x$. Of course, if 
\begin{equation}
P_j - P_1 = (j-1)\epsilon \tilde{P}^{(1)} \label{eq:7new}
\end{equation}
(as occurs when the $P_j$'s are consistent with the slowly changing smooth approximation $P_j = P(j\epsilon)$ for some value of $\epsilon$), then we have the more tractable approximation 
\begin{equation*}
\mu(I-e^{-\alpha}P_1)^{-1}r + e^{-2\alpha}\mu P_1(I-e^{-\alpha}P_1)^{-2}\epsilon \tilde{P}^{(1)} (I-e^{-\alpha}P_1)^{-1} r.
\end{equation*}

Note that $\tilde{P}^{(1)}$ can be approximated via any of the finite differences 
\begin{equation}
\epsilon \tilde{P}^{(1)} \approx\frac{P_j - P_1}{j-1} \label{eq:7anew}
\end{equation}
associated with (\ref{eq:7new}). Given the presence of the discount factor $e^{-\alpha},$ a reasonable choice for $j$ is likely to be something on the order of $(1-e^{-\alpha})^{-1}$.

There is no intrinsic difficulty in computing $j$'th order corrections, for any $j\ge 1$. To illustrate this point, we state the associated second-order correction.

\begin{assumption}~\label{assum:2}
	Suppose that $(P_k(\epsilon):k \ge 1, \epsilon > 0)$ is a family of stochastic matrices for which there exist scalars $((a_{i1},a_{i2}):i\ge 1)$, matrices $\tilde{P}$, $\tilde{P}^{(1)}$, $\tilde{P}^{(2)}$, and scalars $s,$ $\delta$, and $p>0$ such that
	\begin{itemize}
		\item[i)] $\sup_{1\le i\le (\log(1/\epsilon))^{1+\delta}} \| P_i(\epsilon) - (\tilde{P} + \epsilon a_{i1} \tilde{P}^{(1)} + \frac{\epsilon^2}{2}a_{i2}\tilde{P}^{(2)}) \| = O(\epsilon^3(\log(1/\epsilon))^s)$ as $\epsilon\downarrow 0$;
		\item[ii)] $|a_{ij}| = O(i^p)$ as $i\rightarrow\infty$, for $j=1,2$.
	\end{itemize}
	
\end{assumption}
In the presence of such slowly changing transition matrices over the logarithmic time scale $(\log(1/\epsilon))^{1+\delta}$, the following theorem is available. 

\begin{theorem}
	If Assumption \ref{assum:2} holds, then there exists $w>0$ such that 
	\begin{align}
	\kappa(\epsilon) =\,&  \mu(I-e^{-\alpha}\tilde{{P}})^{-1}r + \epsilon e^{-\alpha} \mu \sum_{k=0}^{\infty} a_{k+1,1} e^{-\alpha k} \tilde{P}^k \tilde{P}^{(1)} (I-e^{-\alpha}\tilde{P})^{-1}r \nonumber\\
	& +\frac{\epsilon^2}{2} e^{-\alpha} \mu \sum_{k=0}^{\infty} a_{k+1,2} e^{-\alpha k} \tilde{P}^k \tilde{P}^{(2)}(I-e^{-\alpha}\tilde{P})^{-1}r \nonumber\\
	&+ \epsilon^2 e^{-2\alpha} \mu \sum_{k=1}^{\infty} a_{k1} \tilde{P}^{k-1} e^{-\alpha(k-1)}\tilde{P}^{(1)} \sum_{l=k+1}^{\infty} a_{l1}\tilde{P}^{l-k-1}e^{-\alpha(l-k-1)}\tilde{P}^{(1)} (I-e^{-\alpha}\tilde{P})^{-1}r \nonumber\\
	&+ O(\epsilon^3 (\log(1/\epsilon))^w) \label{eq:8new}
	\end{align}
	as $\epsilon\downarrow 0.$
\end{theorem}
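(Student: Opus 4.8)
The plan is to follow the architecture of the proof of Theorem~\ref{thm:1}, carrying the expansion of the matrix product $\prod_{i=1}^{m}P_i(\epsilon)$ one order further in $\epsilon$. The crux is a second-order analogue of Proposition~\ref{prop:2}: I would show that there exists $w>0$ such that, uniformly in $1\le m\le(\log(1/\epsilon))^{1+\delta}$,
\[
\left\|\prod_{i=1}^{m}P_i(\epsilon)-\Pi_m\right\|=O\!\left(\epsilon^3(\log(1/\epsilon))^w\right)
\]
as $\epsilon\downarrow0$, where
\[
\Pi_m=\tilde P^m+\epsilon\sum_{i=1}^{m}a_{i1}\tilde P^{i-1}\tilde P^{(1)}\tilde P^{m-i}+\frac{\epsilon^2}{2}\sum_{i=1}^{m}a_{i2}\tilde P^{i-1}\tilde P^{(2)}\tilde P^{m-i}+\epsilon^2\!\!\sum_{1\le i<l\le m}\!\!a_{i1}a_{l1}\tilde P^{i-1}\tilde P^{(1)}\tilde P^{l-i-1}\tilde P^{(1)}\tilde P^{m-l}.
\]
The three correction pieces are exactly the contributions one reads off from multiplying out $\prod_{i=1}^m(\tilde P+\epsilon a_{i1}\tilde P^{(1)}+\tfrac{\epsilon^2}{2}a_{i2}\tilde P^{(2)})$ and retaining terms of order $\epsilon^0,\epsilon^1,\epsilon^2$: the single $\tilde P^{(2)}$-insertion, and the double $\tilde P^{(1)}$-insertion at two distinct positions $i<l$.

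Establishing this estimate is the main obstacle, and I would handle it by the same recursive device as in Proposition~\ref{prop:2}. The key structural fact, which I would verify directly, is that $\Pi_m$ is built so that the recursion closes: writing $\hat P_{m+1}=\tilde P+\epsilon a_{m+1,1}\tilde P^{(1)}+\tfrac{\epsilon^2}{2}a_{m+1,2}\tilde P^{(2)}$, one has $\Pi_{m+1}=\Pi_m\hat P_{m+1}+R_{m+1}$, where the residual $R_{m+1}$ collects only genuinely higher-order products (such as $\epsilon A_m\cdot\tfrac{\epsilon^2}{2}a_{m+1,2}\tilde P^{(2)}$ and $\epsilon^2(B_m+C_m)\cdot\epsilon a_{m+1,1}\tilde P^{(1)}$) and hence, using $\|\tilde P\|=\|\prod_{i\le m}P_i(\epsilon)\|=1$ and the polynomial bounds $|a_{ij}|\le d\,i^p$, satisfies $\|R_{m+1}\|=O(\epsilon^3(\log(1/\epsilon))^{w'})$ over the logarithmic range. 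Setting $g_m=\|\prod_{i=1}^m P_i(\epsilon)-\Pi_m\|$ and decomposing $\prod_{i=1}^{m+1}P_i(\epsilon)-\Pi_{m+1}=(\prod_{i=1}^m P_i(\epsilon)-\Pi_m)\hat P_{m+1}+(\prod_{i=1}^m P_i(\epsilon))(P_{m+1}(\epsilon)-\hat P_{m+1})-R_{m+1}$ yields
\[
g_{m+1}\le g_m\big(1+\epsilon\,C_1(\log(1/\epsilon))^{q}\big)+\epsilon^3\,C_2(\log(1/\epsilon))^{w'},
\]
which has precisely the form $g_{m+1}\le c\,g_m+d''$ solved in \eqref{eq:6.4}. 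Since $c=1+\epsilon C_1(\log(1/\epsilon))^q$ and $d''=O(\epsilon^3(\log(1/\epsilon))^{w'})$, and since $c^{(\log(1/\epsilon))^{1+\delta}}\to1$ because $\epsilon(\log(1/\epsilon))^{q+1+\delta}\to0$, the bound $g_m=O(\epsilon^3(\log(1/\epsilon))^w)$ follows uniformly over the range. All the bookkeeping here parallels Proposition~\ref{prop:2}; the only new feature is tracking the two distinct second-order contributions.

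With this estimate in hand, the remainder mirrors the proof of Theorem~\ref{thm:1}. I would split the defining series for $\kappa(\epsilon)$ at $j=(\log(1/\epsilon))^{1+\delta}$. The tails over $j>(\log(1/\epsilon))^{1+\delta}$ — for the exact product, for $\tilde P^j$, and for each of the two correction sums — are all super-polynomially small, hence $O(\epsilon^3)$, because the geometric factor $e^{-\alpha j}$ dominates the at-most-polynomial growth coming from $|a_{i1}|,|a_{i2}|=O(i^p)$ and from $\|\tilde P^{\,\cdot}\|=1$. On the truncated sum I would apply the second-order Proposition, incurring error $\sum_{j\le(\log(1/\epsilon))^{1+\delta}}e^{-\alpha j}O(\epsilon^3(\log(1/\epsilon))^w)=O(\epsilon^3(\log(1/\epsilon))^w)$, and then re-extend each resulting sum back to $j=\infty$ at cost $O(\epsilon^3)$.

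Finally I would rearrange the three surviving sums. The $\tilde P^j$-sum gives $\mu(I-e^{-\alpha}\tilde P)^{-1}r$, and the single $\tilde P^{(1)}$-sum reproduces the first-order term exactly as in Theorem~\ref{thm:1}. For the $\tilde P^{(2)}$-term, interchanging the order via $\sum_{j}\sum_{i\le j}=\sum_{i}\sum_{j\ge i}$ and using $\sum_{j\ge i}e^{-\alpha j}\tilde P^{j-i}=e^{-\alpha i}(I-e^{-\alpha}\tilde P)^{-1}$, then re-indexing $i=k+1$, produces $\tfrac{\epsilon^2}{2}e^{-\alpha}\mu\sum_{k\ge0}a_{k+1,2}e^{-\alpha k}\tilde P^{k}\tilde P^{(2)}(I-e^{-\alpha}\tilde P)^{-1}r$. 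For the double $\tilde P^{(1)}$-term, applying $\sum_{j}\sum_{i<l\le j}=\sum_{i<l}\sum_{j\ge l}$ and the same resolvent identity gives a factor $e^{-\alpha l}$, which I would split as $e^{-\alpha l}=e^{-2\alpha}e^{-\alpha(i-1)}e^{-\alpha(l-i-1)}$ (noting $(i-1)+(l-i-1)+2=l$); relabeling the outer index $i$ as $k$ then yields exactly the third displayed term $\epsilon^2 e^{-2\alpha}\mu\sum_{k\ge1}a_{k1}\tilde P^{k-1}e^{-\alpha(k-1)}\tilde P^{(1)}\sum_{l\ge k+1}a_{l1}\tilde P^{l-k-1}e^{-\alpha(l-k-1)}\tilde P^{(1)}(I-e^{-\alpha}\tilde P)^{-1}r$. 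Collecting the pieces and the accumulated $O(\epsilon^3(\log(1/\epsilon))^w)$ errors establishes \eqref{eq:8new}.
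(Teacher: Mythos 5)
Your proposal is correct and takes essentially the same route as the paper: the paper omits the proof of this theorem, stating only that it is a direct extension of the argument used to establish Theorem~\ref{thm:1}, and your argument is precisely that extension---a second-order analogue of Proposition~\ref{prop:2} obtained by the same closure-of-recursion device (with the residual $R_{m+1}$ collecting only $O(\epsilon^3)$ products), followed by the identical tail-splitting at $j=(\log(1/\epsilon))^{1+\delta}$ and sum re-indexing. Your bookkeeping, including the factorization $e^{-\alpha l}=e^{-2\alpha}e^{-\alpha(k-1)}e^{-\alpha(l-k-1)}$ that produces the displayed double-sum term, checks out.
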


We omit the proof, since it is a direct extension of the argument used to establish Theorem 1. 

As in the setting of the first order correction, this formula greatly simplifies if we consider the case where $P_j(\epsilon) = P((j-1)\epsilon)$ for $j\ge 1$, where $P(\cdot)$ is three times continuously differentiable at $0$. In this case $a_{i1} = i-1$, $a_{i2} = (i-1)^2$, $\tilde{P} = P(0)$, $\tilde{P}^{(1) }= \tilde{P}^{(1)}(0)$, $\tilde{P}^{(2)} = P^{(2)}(0),$ and $s =3(1+\delta)$. Proposition 1 shows that the third term on the right-hand side of (\ref{eq:8new}) then equals 
\begin{align*}
& \frac{\epsilon^2}{2} e^{-\alpha} \mu \sum_{k=0}^{\infty} k^2 (e^{-\alpha}\tilde{P})^k \tilde{P}^{(2)}(I-e^{-\alpha }\tilde{P})^{-1} r \\
=\, & \frac{\epsilon^2}{2} e^{-2\alpha} \mu \tilde P \left[ 2 e^{-\alpha}\tilde P(I - e^{-\alpha} \tilde P)^{-3} +  (I - e^{-\alpha} \tilde P)^{-2}\right] \tilde P^{(2)}(I - e^{-\alpha} \tilde P)^{-1} r.
\end{align*}
Similarly, the fourth term equals 
\begin{align*}
& \epsilon^2 e^{-2\alpha} \mu \sum_{k=1}^{\infty}(k-1)(e^{-\alpha}\tilde{P})^{k-1}\tilde{P}^{(1)} \sum_{l=k+1}^{\infty} (l-1)(e^{-\alpha}\tilde{P})^{l-k-1}\tilde{P}^{(1)}(I-e^{-\alpha}\tilde{P})^{-1}r\\
=\,& 2 \epsilon^2 e^{-3\alpha} \mu \tilde P (I - e^{-\alpha} \tilde P)^{-3} \tilde P^{(1)} (I - e^{-\alpha} \tilde P)^{-1} \tilde P^{(1)} (I - e^{-\alpha} \tilde P)^{-1} r \\& +\epsilon^2 e^{-4\alpha} \mu \tilde P (I - e^{-\alpha} \tilde P)^{-2} \tilde P^{(1)} \tilde P (I - e^{-\alpha} \tilde P)^{-1} r,
\end{align*}
yielding the second-order approximation 
\begin{align}
\label{eq:secondterms}	\nonumber& \mu(I-e^{-\alpha}\tilde{P})^{-1}r +  \epsilon e^{-2\alpha} \mu \tilde{P}\left(I-e^{-\alpha}\tilde{P}\right)^{-2} \tilde{P}^{(1)}(I-e^{-\alpha}\tilde{P})^{-1}r\\
\nonumber   &+\frac{\epsilon^2}{2} e^{-2\alpha} \mu \tilde P \left[ 2 e^{-\alpha}\tilde P(I - e^{-\alpha} \tilde P)^{-3} +  (I - e^{-\alpha} \tilde P)^{-2}\right] \tilde P^{(2)}(I - e^{-\alpha} \tilde P)^{-1} r\\
\nonumber & + 2 \epsilon^2 e^{-3\alpha} \mu \tilde P (I - e^{-\alpha} \tilde P)^{-3} \tilde P^{(1)} (I - e^{-\alpha} \tilde P)^{-1} \tilde P^{(1)} (I - e^{-\alpha} \tilde P)^{-1} r \\& +\epsilon^2 e^{-4\alpha} \mu \tilde P (I - e^{-\alpha} \tilde P)^{-2} \tilde P^{(1)} \tilde P (I - e^{-\alpha} \tilde P)^{-1} r
\end{align}
for $\kappa.$

As with the first order approximation, the terms $\epsilon \tilde{P}^{(1)}$ and $\epsilon^2 \tilde{P}^{(2)}$ appearing in the approximation can be replaced by finite difference approximations, given by (\ref{eq:7anew}) and 
\begin{equation}
\epsilon^2 \tilde{P}^{(2)} \approx \frac{P_{2j-1}-2P_j + P_1}{(j-1)^2}, \label{eq:7aa}
\end{equation}
yielding an implementable approximation to $\kappa$ when condition (\ref{eq:6anew}) is in force.

\section{Approximating Expected Reward Cumulated to a Hitting Time} 
In this section, we extend the analysis of Section 2 to expectations of the form
\[
\delta = \E \sum_{j=0}^{T}r(X_j),
\]
where $T = \inf\{n\ge0: X_n\in C^c \}$ is the hitting time of a subset $C^c\subset S$. Such expectations arise in computing expected hitting times, and computing absorption probabilities of the form $P(X_T=y)$ (where $r(y)=1$ if $y\in C^c$ and 0 otherwise). These expectations are also of interest in dependability modeling and in actuarial risk calculations. 

For $i\ge 1$, let $B_i = (B_i(x,y):x,y\in C)$, where $B_i(x,y) = P_i(x,y)$ for $x,y\in C$, so that $B_i$ is the principal matrix of $P_i$ corresponding to ``$C$ to $C$" transitions. Then,
\begin{equation}
\delta = \sum_{j=0}^{\infty} \mu\left(\prod_{k=1}^{j}B_k\right)r_{j+1}, \label{eq:3.1}
\end{equation}
where $r_j(x) = r(x) + \sum_{y\in C^c} P_j(x,y)r(y) $ for $x\in C$.

Suppose that $X$ has stationary transition probabilities. If $(I-B_1)^{-1}$ exists, then $\delta = \mu w$, where $w=(I-B_1)^{-1}r_1$ and $w$ is the unique finite-valued solution of the linear system
\begin{equation}
w= r_1 + B_1 w. \label{eq:3.2}
\end{equation}
As in Section 2, our goal is to improve upon this zero'th order approximation $w$ to $\delta$, under the condition that $X$ has slowly changing transition probabilities. We adopt the framework of Section 2, and consider the function
\[
\delta(\epsilon) = \sum_{j=0}^{\infty} \mu \left(\prod_{k=1}^{j}{B}_k(\epsilon)\right)r_{j+1}(\epsilon),
\]
where $B_k(\epsilon)$ is the corresponding principal sub-matrix of $P_k(\epsilon)$ and $r_k(\epsilon) = (r_k(\epsilon,x):x\in C)$, with $r_k(\epsilon,x) = r(x) + \sum_{y\in C^c} P_k(\epsilon,x,y) r(y)$, for $k\ge 1$.
\begin{theorem}\label{thm:3}
	Assume that there exist $l\ge 1$, $s>0$, $\delta>0$, and matrices $\tilde{P}$ and $\tilde{P}^{(1)}$ such that:
	\begin{itemize}
		\item[i)] $\sup_{1\le i\le (\log(1/\epsilon))^{1+\delta}} \|P_i(\epsilon) - (\tilde{P} + \epsilon(i-1)\tilde{P}^{(1)})\| = O(\epsilon^2 (\log(1/\epsilon))^s)$ as $\epsilon\downarrow0;$
		\item[ii)] $\sup_{\substack{k\ge 0, \, \epsilon>0}} \|B_{k+1}(\epsilon)B_{k+2}(\epsilon)\cdots B_{k+l}(\epsilon)\| < 1.$
	\end{itemize}
	Then, there exists $w<\infty$ such that 
	\[
	\delta(\epsilon) = (I-\tilde{B})^{-1}r + \epsilon\mu \tilde{B}(I-\tilde{B})^{-2} \tilde{B}^{(1)} (I-\tilde{B})^{-1}\tilde{r} + \epsilon\mu \tilde{B} (I-\tilde{B})^{-2} \tilde{r}^{(1)} + O(\epsilon^2(\log(1/\epsilon))^w)
	\]
	as $\epsilon\downarrow0$, where $\tilde{r}(x) = r(x) + \sum_{y\in C^c}\tilde{P}(x,y)r(y)$, $\tilde{r}^{(1)}(x) = \sum_{y\in C^c} \tilde{P}^{(1)}(x,y)r(y)$, $\tilde{B}=(\tilde{P}(x,y):x,y\in C)$, and $\tilde{B}^{(1)} = (\tilde{P}^{(1)}(x,y):x,y\in C).$
\end{theorem}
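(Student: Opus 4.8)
The plan is to mirror the proof of Theorem 1, decomposing the infinite series \eqref{eq:3.1} at the logarithmic cutoff $N(\epsilon) = (\log(1/\epsilon))^{1+\delta}$, controlling the tail by geometric decay, and expanding the head term by term. Two features distinguish this from the discounted case. First, the geometric decay that played the role of the discount factor $e^{-\alpha j}$ in Theorem 1 is now supplied by the substochasticity of the $B_k(\epsilon)$, quantified through assumption ii). Second, the reward vector $r_{j+1}(\epsilon)$ is itself index- and $\epsilon$-dependent, so it must be expanded alongside the matrix product rather than held fixed.

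I would begin with preliminaries. Since each $B_k(\epsilon)$ is a principal submatrix of a stochastic matrix, $\|B_k(\epsilon)\|\le 1$, and assumption i) restricted to the $C$-block yields $\|B_i(\epsilon) - (\tilde{B} + \epsilon(i-1)\tilde{B}^{(1)})\| = O(\epsilon^2(\log(1/\epsilon))^s)$ uniformly for $1\le i\le N(\epsilon)$, with $\tilde{B}$ substochastic. Writing $\rho = \sup_{k\ge 0,\epsilon>0}\|B_{k+1}(\epsilon)\cdots B_{k+l}(\epsilon)\| < 1$ and grouping any product of consecutive $B$'s into blocks of length $l$, I get $\|B_{k+1}(\epsilon)\cdots B_{k+m}(\epsilon)\|\le \rho^{\lfloor m/l\rfloor}$; letting $\epsilon\downarrow 0$ gives $\|\tilde{B}^l\|\le\rho<1$, hence $\|\tilde{B}^m\|\le \rho^{\lfloor m/l\rfloor}$ and, by Proposition \ref{prop:1}, the existence of $(I-\tilde{B})^{-1}$. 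Finally, stochasticity of $P_{j+1}(\epsilon)$ bounds $\|r_{j+1}(\epsilon)\|$ uniformly, while assumption i) (evaluated at $i=j+1$, where the slope coefficient is $j$) gives the reward expansion $r_{j+1}(\epsilon) = \tilde{r} + \epsilon j\,\tilde{r}^{(1)} + O(\epsilon^2(\log(1/\epsilon))^s)$ uniformly for $j+1\le N(\epsilon)$.

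Next I would establish the analogue of Proposition \ref{prop:2} for the substochastic family: the recursion driving that proof uses only $\|\tilde{B}\|\le 1$ and $\|\prod B_i(\epsilon)\|\le 1$, so it carries over verbatim to give $\|\prod_{k=1}^m B_k(\epsilon) - (\tilde{B}^m + \epsilon\sum_{i=1}^m (i-1)\tilde{B}^{i-1}\tilde{B}^{(1)}\tilde{B}^{m-i})\| = O(\epsilon^2(\log(1/\epsilon))^w)$ uniformly for $m\le N(\epsilon)$. The block-geometric bound $\rho^{\lfloor m/l\rfloor}$ then makes each of the three relevant tails beyond $N(\epsilon)$ — the true product series, the $\tilde{B}^j\tilde{r}$ series, and the correction series (the last carrying at most a polynomial-in-$j$ prefactor) — bounded by $\exp(-c(\log(1/\epsilon))^{1+\delta})$, which is $o(\epsilon^2)$ because $\delta>0$. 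For the head I would substitute both the matrix expansion and the reward expansion, discard the $O(\epsilon^2)$ cross term, and collect the three surviving contributions: the zeroth-order term $\mu\tilde{B}^j\tilde{r}$, the reward-derivative term $\epsilon\mu\, j\,\tilde{B}^j\tilde{r}^{(1)}$, and the transition-derivative term $\epsilon\mu(\sum_{k=1}^j (k-1)\tilde{B}^{k-1}\tilde{B}^{(1)}\tilde{B}^{j-k})\tilde{r}$. Summing over $j\ge 0$ (restoring the tails at cost $o(\epsilon^2)$), interchanging the $j$- and $k$-summations in the last term, and invoking the identity $\sum_{m\ge 0} m\tilde{B}^m = \tilde{B}(I-\tilde{B})^{-2}$ (a consequence of Proposition \ref{prop:1}), yields $\mu(I-\tilde{B})^{-1}\tilde{r} + \epsilon\mu\tilde{B}(I-\tilde{B})^{-2}\tilde{B}^{(1)}(I-\tilde{B})^{-1}\tilde{r} + \epsilon\mu\tilde{B}(I-\tilde{B})^{-2}\tilde{r}^{(1)}$, matching the claimed approximation.

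The main obstacle is the bookkeeping around the two coupled first-order expansions: one must verify that the product of the $O(\epsilon)$ matrix correction with the $O(\epsilon)$ reward correction is genuinely absorbed into the $O(\epsilon^2(\log(1/\epsilon))^w)$ remainder, uniformly over $j\le N(\epsilon)$, and that the interchange of the $j$- and $k$-summations in the transition-derivative term is licensed by the geometric decay extracted from assumption ii). Subtler, but ultimately routine, is confirming that replacing the discount-driven decay of Theorem 1 by the substochasticity-driven decay of assumption ii) does not degrade the uniform error estimates — in particular that the polynomial-in-$j$ prefactors arising from the slope $i-1$ and from the reward derivative remain dominated by the block-geometric factor $\rho^{\lfloor j/l\rfloor}$.
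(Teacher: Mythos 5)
Your proposal is correct and follows essentially the same route as the paper's proof: split the series at the cutoff $(\log(1/\epsilon))^{1+\delta}$, use assumption ii) to get block-geometric decay $\|\tilde{B}^m\|\le\beta^{\lfloor m/l\rfloor}$ that kills the three tail sums, run the Proposition \ref{prop:2} recursion verbatim on the substochastic family for the head, expand $r_{j+1}(\epsilon)=\tilde r+\epsilon j\,\tilde r^{(1)}+O(\epsilon^2(\log(1/\epsilon))^s)$ alongside it, and collapse the resulting sums via Proposition \ref{prop:1}. Your stated conclusion $\mu(I-\tilde B)^{-1}\tilde r+\epsilon\mu\tilde B(I-\tilde B)^{-2}\tilde B^{(1)}(I-\tilde B)^{-1}\tilde r+\epsilon\mu\tilde B(I-\tilde B)^{-2}\tilde r^{(1)}$ is in fact what the paper's own derivation (its display culminating in the use of Proposition \ref{prop:1}) produces, so no gap there.
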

\begin{proof}{Proof of Theorem \ref{thm:3}.}
Condition $ii)$ ensures that
\[
\| B_1(\epsilon)\cdots B_{ml}(\epsilon)  \| \le \beta^m,
\]
where $\beta\triangleq \sup\{ \|B_{k+1}(\epsilon) \cdots B_{k+l}(\epsilon)\|: k\ge0, \epsilon >0 \} <1$. Furthermore, 
\[
\|\tilde{B}^{l}\| = \limsup_{\epsilon\downarrow 0}\| B_1(\epsilon)\cdots B_l(\epsilon) \| \le \beta,
\]
from which it follows that
\begin{equation}
\left\| \sum_{j> (\log(1/\epsilon))^{1+\delta}} \mu\left(\prod_{k=1}^{j}B_k(\epsilon)\right)r_{j+1}(\epsilon)   \right\|= O(\epsilon^2), \label{eq:1sec3new}
\end{equation}
\begin{equation}
\left\| \sum_{j> (\log(1/\epsilon))^{1+\delta}} j\mu\tilde{B}^j \tilde{r}\,\right\| = O(\epsilon^2) \label{eq:2sec3new},
\end{equation}
and 
\begin{equation}
\left\| \sum_{j> (\log(1/\epsilon))^{1+\delta}} j\mu\tilde{B}^j \tilde{r}^{(1)}\,\right\| = O(\epsilon^2) \label{eq:3sec3new}
\end{equation}
as $\epsilon\downarrow0.$

On the other hand, condition $i)$ (and an argument identical to that used to establish Proposition 2) guarantees that there exists $v<\infty$ such that 
\begin{equation}
\sup_{1\le m\le (\log(1/\epsilon))^{1+\delta}} \left\| B_1(\epsilon)\cdots B_m(\epsilon) - \left(\tilde{B}^m + \epsilon \sum_{i=1}^{m-1} i \tilde{B}^i \tilde{B}^{(1)}\tilde{B}^{m-i-1}\right)    \right\| = O(\epsilon^2(\log(1/\epsilon))^v) \label{eq:4sec3new}
\end{equation}
as $\epsilon\downarrow0$. In addition, condition $i)$ also ensures that 
\begin{equation}
\sup_{1\le m\le (\log(1/\epsilon))^{1+\delta}} \| r_m(\epsilon)-(\tilde{r} + \epsilon(m-1)\tilde{r}^{(1)})  \| = O(\epsilon^2(\log(1/\epsilon))^s) \label{eq:5sec3new}
\end{equation}
as $\epsilon\downarrow 0$. Now (\ref{eq:4sec3new}) and (\ref{eq:5sec3new}) guarantee that 
\begin{align}
\left\| \sum_{j\le (\log(1/\epsilon))^{1+\delta}} \mu\left(\prod_{k=1}^{j}B_k(\epsilon)\right)r_{j+1}(\epsilon) - \sum_{j\le (\log(1/\epsilon))^{1+\delta}} \mu\left(\tilde{B}^j + \epsilon\sum_{i=1}^{j-1}i\tilde{B}^i \tilde{B}^{(1)}\tilde{B}^{m-i}\right)\tilde{r} \right. \nonumber \\
\left.  -\,\epsilon\sum_{j\le (\log(1/\epsilon))^{1+\delta}} \mu \tilde{B}^j j \tilde{r}^{(1)} \right\| = O(\epsilon^2(\log(1/\epsilon))^w) \label{eq:6sec3new}
\end{align}
as $\epsilon\downarrow0$, where $w = \max(s,v+1+\delta)$. Combining (\ref{eq:6sec3new}) with (\ref{eq:1sec3new}), (\ref{eq:2sec3new}), and (\ref{eq:3sec3new}), we conclude that
\begin{align}
&\left\|\sum_{j=0}^{\infty} \mu\left(\prod_{k=1}^{j}B_k(\epsilon) \right)r_{j+1}(\epsilon) - \sum_{j=0}^{\infty}\mu\tilde{B}^j \tilde{r} - \epsilon \sum_{j=0}^{\infty}\sum_{i=1}^{j-1}i\mu\tilde{B}^i \tilde{B}^{(1)}\tilde{B}^{j-i}\tilde{r} - \epsilon\sum_{j=0}^{\infty} j\mu\tilde{B}^j\tilde{r}^{(1)} \right\|\nonumber\\
&\quad= O(\epsilon^2(\log(1/\epsilon))^w) \label{eq:7sec3new}
\end{align}
as $\epsilon\downarrow0.$ We now use Proposition 1 to simplify the sums in (\ref{eq:7sec3new}), thereby yielding the theorem. 
\end{proof}

Suppose now that $P_j(\epsilon) = P((j-1)\epsilon)$ for $j\ge1$ and $\epsilon>0$. In this case, Theorem 3 holds when $P(\cdot)$ is differentiable in a neighborhood of 0, and
\[
\sup_{\theta\ge0} \|B^l(\theta)\| < 1,
\]
where $B(\theta)$ is the appropriate principal sub-matrix of $P(\theta)$. In this setting, $\tilde{B} = B(0)$, $\tilde{B}^{(1)} = B^{(1)}(0)$, and $\tilde{r}$ and $\tilde{r}^{(1)}$ have entries given by $\tilde{r}(x) = r(x) + \sum_{y\in C^c} P(0,x,y)r(y)$, $\tilde{r}^{(1)}(x) =\sum_{y\in C^c} P^{(1)}(0,x,y)r(y).$ While Theorem 3 can be stated directly in terms of $P(\cdot)$, we choose to use the hypotheses of Theorem 3 to make clear that the slow variation of the $B_i$'s is only required for the first $(\log(1/\epsilon))^{1+\delta}$ transitions. 

We follow the same approach as in Section 2 to apply this approximation to models with a given sequence $(P_j: j\ge1)$ of transition matrices. Assuming that 
\[
\max_{1\le j\le b/(1-\beta)} \| P_j - P_1 - (j-1)(P_2-P_1)\|
\]
is small for $b$ large, Theorem 3 should provide a good approximation to $\delta$, with $\tilde{B} = B_1$, $\tilde{r} = r_1$, $\epsilon\tilde{B}^{(1)} = (B_2-B_1)$, and $\epsilon\tilde{r}^{(1)}$ having entries given by $\epsilon\tilde{r}^{(1)}(x) = \sum_{y\in C^c} (B_2(x,y)-B_1(x,y)) r(y)$. As in Section 2, we can also develop a second order correction for $\delta$ that reflects the ``curvature" in the sequence $(P_j:j\ge 1)$; we omit the details.

\section{Approximating ${\E \lowercase{r}(X_{\lowercase{n}})}$}
We turn next to the question of how to approximate the transient quantity $\E r(X_n)$, when $X$ is a Markov chain with slowly changing transition probabilities. In particular, given the sequence $P_1,P_2,\ldots$ of transition matrices, $\chi_n \triangleq \E r(X_n)$ can be expressed as
\[
\chi_n = \mu P_1 P_2\cdots P_n r.
\]
We develop two different approximations in this setting. The first is appropriate when $n$ is small, while the second requires that $n$ be large. Once again, we consider a parameterized family of transition matrices $(P_i(\epsilon):i\ge 1, \epsilon>0)$. 
\begin{theorem}\label{thm:4}
	Assume that $n=n(\epsilon)$ is such that $n/\log(1/\epsilon)\rightarrow\infty$ and $n = o(\epsilon^{-1/3})$ as $\epsilon\downarrow0$. Suppose that there exist matrices $\tilde{P}$, $\tilde{P}^{(1)}$ for which 
	\begin{equation}
	\sup_{1\le i \le n} \| P_i(\epsilon) - (\tilde{P}+\epsilon(i-1)\tilde{P}^{(1)}) \| = O(\epsilon^{2}n^2) \label{eq:1sec4new}
	\end{equation}
	as $\epsilon\downarrow0$, where $\tilde{P}$ is aperiodic and irreducible. Then,
	\[
	\mu\prod_{j=1}^{n}P_j(\epsilon)r = \tilde{\pi} r + \epsilon n \tilde{\pi}\tilde{{P}}^{(1)}(I-\tilde{P}+\tilde{\Pi})^{-1}r - \epsilon \tilde{\pi}\tilde{P}^{(1)}(I-\tilde{P}+\tilde{\Pi})^{-2}r + O(\epsilon^{2}n^3)
	\]
	as $\epsilon\downarrow 0$, where $\tilde{\pi}$ is the row vector corresponding to the stationary distribution of $\tilde{P}$, and $\tilde{\Pi}$ is the rank one matrix with all rows identical to $\tilde{\pi}$. 
\end{theorem}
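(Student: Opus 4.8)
The plan is to expand $\mu\prod_{j=1}^{n}P_j(\epsilon)r$ about the stationary matrix $\tilde P$ by a Duhamel (first-step) expansion in the perturbations $E_i\triangleq P_i(\epsilon)-\tilde P$, and then to use the ergodicity of $\tilde P$ both to evaluate the surviving first-order term in closed form and to control all remainders. I first collect the facts I will lean on. Exactly as in the proof of Proposition \ref{prop:2}, $\tilde P$ is stochastic (it is an $\epsilon\downarrow 0$ limit of stochastic matrices), so $\tilde P\mathbf{1}=\mathbf{1}$ for the all-ones column vector $\mathbf{1}$; applying (\ref{eq:1sec4new}) to $\mathbf{1}$ at the index $i=n$ and using $P_n(\epsilon)\mathbf{1}=\mathbf{1}$ gives $\epsilon(n-1)\|\tilde P^{(1)}\mathbf{1}\|=O(\epsilon^2 n^2)$, whence $\tilde P^{(1)}\mathbf{1}=0$ (here $\epsilon n^2\to 0$ and $\tilde P^{(1)}$ is fixed). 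Consequently $E_i\mathbf{1}=0$, and since $\tilde\Pi=\mathbf{1}\tilde\pi$ we also get $\tilde P^{(1)}\tilde\Pi=0$ and $E_i\tilde\Pi=0$. Because $\tilde P$ is aperiodic and irreducible, $A\triangleq\tilde P-\tilde\Pi$ satisfies $\tilde P^k=\tilde\Pi+A^k$ for $k\ge 1$ and $\|A^k\|\le C\rho^k$ for some $\rho\in(0,1)$; writing $Z\triangleq(I-\tilde P+\tilde\Pi)^{-1}=\sum_{k\ge 0}A^k$ for the fundamental matrix, one has the identities $\tilde\Pi Z=\tilde\Pi$ and $\tilde P Z=Z-(I-\tilde\Pi)$, both of which enter the final simplification.

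Grouping the expansion of $\prod_{j=1}^{n}(\tilde P+E_j)$ by the number of $E$-factors, the zeroth-order term is $\mu\tilde P^n r=\tilde\pi r+\mu A^n r$, and $\|\mu A^n r\|=O(\rho^n)$, which is $o(\epsilon^2 n^3)$ since $n/\log(1/\epsilon)\to\infty$. The order-$j$ terms for $j\ge 2$ are to be discarded, and the crucial point is that the rightmost perturbation removes the stationary part: $E_{i_j}\tilde P^{n-i_j}r=E_{i_j}A^{n-i_j}r$, because $E_{i_j}\tilde\Pi r=0$. Using $\|E_i\|=O(\epsilon n)$, that the leading probability vector and the intermediate stochastic factors have norm one, and $\|A^{n-i_j}r\|\le C\rho^{n-i_j}$, the order-$j$ term is bounded by $(C\epsilon n)^j\sum_{i_1<\cdots<i_j}\rho^{n-i_j}\le (C\epsilon n)^j\,\frac{n^{j-1}}{(j-1)!\,(1-\rho)}$. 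Summing over $j\ge 2$ produces a series dominated by its $j=2$ term and equal to $O(\epsilon^2 n^3)$; the convergence and this dominance use $\epsilon n^2\to 0$, which is implied by $n=o(\epsilon^{-1/3})$. I would also note that $n=o(\epsilon^{-1/3})$ is precisely the statement $\epsilon n^3\to 0$, i.e.\ that this $O(\epsilon^2 n^3)$ remainder is $o(\epsilon)$ and so is negligible against the $\epsilon$-order correction.

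It then remains to evaluate the first-order term $T_1=\mu\sum_{i=1}^{n}\tilde P^{i-1}E_i\tilde P^{n-i}r$. Writing $E_i=\epsilon(i-1)\tilde P^{(1)}+F_i$ with $\|F_i\|=O(\epsilon^2 n^2)$ by (\ref{eq:1sec4new}), the $F_i$-contribution is $O(\epsilon^2 n^3)$ (bound the $n$ summands crudely by $\|F_i\|\,\|r\|$). For the principal part $\epsilon\mu\sum_{i=1}^{n}(i-1)\tilde P^{i-1}\tilde P^{(1)}\tilde P^{n-i}r$, replacing $\mu\tilde P^{i-1}$ by $\tilde\pi$ costs $\epsilon\mu\sum_i(i-1)A^{i-1}\tilde P^{(1)}\tilde P^{n-i}r$, which after using $\tilde P^{(1)}\tilde\Pi r=0$ is bounded by $\epsilon\sum_i(i-1)\rho^{i-1}\rho^{n-i}=O(\epsilon n^2\rho^n)=o(\epsilon^2 n^3)$. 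This leaves $\epsilon\tilde\pi\tilde P^{(1)}\sum_{i=1}^{n}(i-1)(\tilde P^{n-i}-\tilde\Pi)r$, where subtracting $\tilde\Pi$ is free because $\tilde P^{(1)}\tilde\Pi r=0$. Re-indexing by $k=n-i$ and writing $(n-k-1)=(n-1)-k$, the two resulting sums $\sum_{k\ge 0}(\tilde P^k-\tilde\Pi)=Z-\tilde\Pi$ and $\sum_{k\ge 1}k(\tilde P-\tilde\Pi)^k=(\tilde P-\tilde\Pi)Z^2$ (the latter from Proposition \ref{prop:1}(ii) applied to $A$, which obeys $\|A^l\|<1$ for some $l$) evaluate the principal part as $\epsilon\tilde\pi\tilde P^{(1)}\big[(n-1)(Z-\tilde\Pi)-(\tilde P-\tilde\Pi)Z^2\big]r$, up to geometric tails of size $O(\epsilon n\rho^n)=o(\epsilon^2 n^3)$.

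Finally I would collect terms. Using $\tilde P^{(1)}\tilde\Pi=0$, $\tilde\Pi Z^2=\tilde\Pi$, and $\tilde P Z=Z-(I-\tilde\Pi)$ (whence $\tilde P Z^2=Z^2-Z+\tilde\Pi$ and so $(\tilde P-\tilde\Pi)Z^2=Z^2-Z$), the bracket reduces to $nZ-Z^2$ once the $\tilde\Pi$ parts are annihilated by $\tilde\pi\tilde P^{(1)}$, giving $\epsilon n\,\tilde\pi\tilde P^{(1)}Z r-\epsilon\,\tilde\pi\tilde P^{(1)}Z^2 r$; together with the zeroth-order $\tilde\pi r$ this is the asserted expansion with $Z=(I-\tilde P+\tilde\Pi)^{-1}$. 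The main obstacle is the error bookkeeping of the second paragraph: one must verify that the geometric mixing of $\tilde P$ supplies, through the single cancellation $E_{i_j}\tilde\Pi=0$ on the last perturbation, exactly enough decay to offset the combinatorial growth $\binom{n}{j}$ and the linear-in-index weights, so that the neglected orders sum to $O(\epsilon^2 n^3)$ under $n=o(\epsilon^{-1/3})$. The remaining delicacy is purely algebraic, namely the simplification via $\tilde P Z=Z-(I-\tilde\Pi)$ that produces the exact second coefficient $-\epsilon\,\tilde\pi\tilde P^{(1)}Z^2 r$.
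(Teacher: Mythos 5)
Your proof is correct, and it differs from the paper's in one essential respect: how the $O(\epsilon^{2}n^{3})$ remainder is controlled. The paper recycles the recursion of Proposition~\ref{prop:2}: with $f_{m}$ the deviation of the $m$-step product from its first-order expansion, inequality~(\ref{eq:A5}) gives $f_{m+1}\le f_{m}(1+\epsilon n\|\tilde{P}^{(1)}\|)+d''\epsilon^{2}n^{3}$, and the ergodicity of $\tilde{P}$ enters only afterwards, to evaluate the surviving sums. You instead expand $\prod_{j}(\tilde{P}+E_{j})$ by the number of perturbation factors and bound each order $j\ge 2$ directly, exploiting mixing already at that stage: since $E_{i}e=0$ forces $E_{i_{j}}\tilde{\Pi}=0$, the rightmost perturbation sees only $A^{n-i_{j}}r$ (with $A=\tilde{P}-\tilde{\Pi}$), so the sum over the last index contributes $O(1)$ rather than $O(n)$, the order-$j$ term is $O((\epsilon n)^{j}n^{j-1}/(j-1)!)$, and the total over $j\ge2$ is $O(\epsilon^{2}n^{3})$ because $\epsilon n^{2}\to 0$. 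This buys something concrete. Iterating $f_{m+1}\le cf_{m}+d''$ actually yields $f_{m}\le d''(c^{m}-1)/(c-1)\approx d''m$, not the bound $d''(1+c^{m})$ asserted in the paper; the extra factor $m\le n$ degrades the paper's remainder to $O(\epsilon^{2}n^{4})$, which under $n=o(\epsilon^{-1/3})$ is not $o(\epsilon)$ and hence is not dominated by the order-$\epsilon$ correction term (in Proposition~\ref{prop:2} itself the same slip costs only a harmless logarithmic factor). Your mixing-based bound on the orders $j\ge2$ supplies exactly the missing power of $n$ and proves the theorem at its advertised error rate. The rest of your argument---$\tilde{P}^{(1)}e=0$ hence $\tilde{P}^{(1)}\tilde{\Pi}=0$, the split $\tilde{P}^{k}=\tilde{\Pi}+A^{k}$ with cross terms killed by geometric decay and $n/\log(1/\epsilon)\to\infty$, and evaluation of $\sum_{k}kA^{k}$ via Proposition~\ref{prop:1}(ii)---mirrors the paper's treatment of the first-order term; writing $(n-k-1)=(n-1)-k$ and simplifying through $\tilde{\Pi}Z=\tilde{\Pi}$ and $\tilde{P}Z=Z-(I-\tilde{\Pi})$ with $Z=(I-\tilde{P}+\tilde{\Pi})^{-1}$, as you do, versus the paper's re-indexing $(n-1-j)=n-(j+1)$, is purely cosmetic.
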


Theorem \ref{thm:4} states that if we have slow variation of the $P_i(\epsilon)$'s over $[0,n]$, then we have an approximation to $\mu\prod_{j=1}^{n}P_j(\epsilon)r$ with an error of order $O(\epsilon^{2}n^3)$ as $\epsilon\downarrow0.$ Note that the error term is of smaller order than the two asymptotic corrections of orders $n\epsilon$ and $\epsilon$ when $n = o(\epsilon^{-1/3})$. We further note that the asymptotic corrections involve the \textit{fundamental matrix} $(I-\tilde{P} + \tilde{\Pi})^{-1} = \sum_{n=0}^{\infty}(\tilde{P}-\tilde{\Pi})^n = I+\sum_{n=1}^{\infty}(\tilde{P}^n - \tilde{\Pi})$; see \cite{kemeny1960finite} for a discussion of its role in the analysis of Markov chains with stationary transition probabilities. 

The hypotheses of Theorem \ref{thm:4} hold when $P_i(\epsilon) = P((i-1)\epsilon)$ for $i\ge 1$, where $P(\cdot)$ is twice continuously differentiable in a neighborhood of $0$, with $P(0)$ aperiodic and irreducible. In our proof, we exploit the fact that $\tilde{\Pi}\tilde{P} = \tilde{\Pi}$ and that $A\tilde{\Pi} = \tilde{\Pi}$ whenever $A$ is stochastic. 

\begin{proof}{Proof of Theorem \ref{thm:4}.}
We note that (\ref{eq:1sec4new}) implies that for $\epsilon$ sufficiently small, there exists $d<\infty$ such that 
\[
\|	P_i(\epsilon) -(\tilde{P}+\epsilon(i-1)\tilde{P}^{(1)})\| \le d \,\epsilon^{2}n^2
\]
for $1\le i\le n$. With $f_m$ describing the same quantity as in the proof of Proposition 2, we now apply inequality (\ref{eq:A5}) to conclude that
\begin{align*}
f_{m+1} & \le f_m + d\,\epsilon^{2} n^2 + \epsilon f_m m \|\tilde{P}^{(1)}\| + \epsilon^2 m\sum_{i=1}^{m}i\|\tilde{P}^{(1)}\|^2 \\
&\le f_m(1+\epsilon m \|\tilde{P}^{(1)}\|) + d\,\epsilon^{2}n^2 + \epsilon^2 n^3\|\tilde{P}^{(1)}\|^2 \\
&	\le f_m(1+\epsilon n\|\tilde{P}^{(1)}\|) + d''\epsilon^{2}n^3
\end{align*}
for $1\le m+1\le n$ and for some constant $d''$. The proof of Proposition 2 then shows that 
\[
f_m\le 2d'' \epsilon^2n^3(1+\epsilon n\|\tilde{P}^{(1)}\|)^{n}
\]
for $1\le m\le n$. Because $n = o(\epsilon^{-1/2})$, $n \log(1+\epsilon n\|\tilde{P}^{(1)}\|)\rightarrow 0$ as $\epsilon\downarrow 0$, and hence 
\[
f_m = O(\epsilon^{2}n^3)
\]
as $\epsilon\downarrow 0$ and uniformly in $[1,n]$, proving that 
\begin{equation}
\mu\prod_{j=1}^{n}P_j(\epsilon) r = \mu \tilde{P}^n r +\epsilon \mu\sum_{i=1}^{n-1}i \tilde{P}^{i}\tilde{P}^{(1)}\tilde{P}^{n-i-1} r + O(\epsilon^{2}n^3) \label{eq:286C}
\end{equation}
as $\epsilon\downarrow0.$ Observe, the second term equals 
\begin{align}
&\epsilon\mu  \sum_{i=1}^{n-1}i(\tilde{P}-\tilde{\Pi})^{i}\tilde{P}^{(1)}\tilde{P}^{n-i-1}r + \epsilon\mu \sum_{i=1}^{n-1}i\tilde{\Pi} \tilde{P}^{(1)}\tilde{P}^{n-i-1}r \nonumber\\
=\, & \epsilon\mu  \sum_{i=1}^{n-1}i(\tilde{P}-\tilde{\Pi})^{i}\tilde{P}^{(1)}(\tilde{P}-\tilde{\Pi})^{n-i-1}r +\epsilon\mu  \sum_{i=1}^{n-2}i(\tilde{P}-\tilde{\Pi})^{i-1}\tilde{P}^{(1)}\tilde{\Pi}^{n-i-1}r+ \epsilon \sum_{i=1}^{n-1}i\tilde{\pi} \tilde{P}^{(1)}\tilde{P}^{n-i-1}r \label{eq:threeterm}
\end{align}
Next we show that $\tilde{P}^{(1)}\tilde{\Pi} = 0$. Divide (\ref{eq:1sec4new}) through by $\epsilon$ and note that
\[
\left\| \frac{P_i(\epsilon) - \tilde{P}}{\epsilon} - (i-1)\tilde{P}^{(1)} \right\| \rightarrow 0
\] 
as $\epsilon\downarrow 0$, due to the fact that $n=o(\epsilon^{-1/2})$. Let $e$ be a column vector consisting all 1's. Note that $(P_i(\epsilon) - \tilde{P})e = 0$ since both matrices $P_i(\epsilon)$ and $\tilde{P}$ are stochastic. Therefore $\tilde{P}^{(1)}e = 0$, implying $\tilde{P}^{(1)}\tilde{\Pi} = 0$ (since $\tilde{\Pi}$ has identical entries in each column). It follows that the second term on the right-hand side of (\ref{eq:threeterm}) vanishes.

The aperiodicity of $\tilde{P}$ ensures that $\| (\tilde{P}-\tilde{\Pi})^k \| = \|\tilde{P}^k - \tilde{\Pi}\|\rightarrow 0$ geometrically fast in $k$. In view of the fact that $n/\log(1/\epsilon)\rightarrow\infty$ as $\epsilon\downarrow 0$, this implies that
\[
\|(\tilde{P}-\tilde{\Pi})^{\frac{n}{2}}\| = O(\epsilon^k)
\]
as $\epsilon\downarrow0$, for each $k\ge 1$. Hence, $\mu\tilde{P}^n r = \tilde{\pi}r + O(\epsilon^k)$ and
\begin{align}
&\left\|\mu \sum_{i=1}^{n-1} i (\tilde{P}-\tilde{\Pi})^{i}\tilde{P}^{(1)}(\tilde{P}-\tilde{\Pi})^{n-i-1} r\right\| \nonumber\\
\le\, &\sup_{j\ge \frac{n}{2}-1}\| (\tilde{P} - \tilde{\Pi})^j \| \cdot \left( \left\| \sum_{i \le \frac{n}{2}} i (\tilde{P}-\tilde{\Pi})^{i}\tilde{P}^{(1)} \right\| \cdot \|r\| + \left\| \sum_{\frac{n}{2}\le i< n} i \tilde{P}^{(1)}(\tilde{P} - \tilde{\Pi})^{n-i-1} \right\| \cdot \|r\| \right) \nonumber\\
=\,&  O\left(\sup_{j\ge \frac{n}{2}-1} \|(\tilde{P}-\tilde{\Pi})^j\| \right) = O(\epsilon^k) \label{eq:286A}
\end{align}
as $\epsilon\downarrow 0$, for each $k\ge 1$. 

Finally, the third term on the right-hand side of (\ref{eq:threeterm}) equals
\begin{align}
& \epsilon\sum_{i=1}^{n-1}i\tilde{\pi} \tilde{P}^{(1)}(\tilde{P}-\tilde{\Pi})^{n-i-1}r \nonumber\\
=\, &\epsilon\sum_{j=0}^{n-2}(n-1-j)\tilde{\pi} \tilde{P}^{(1)}(\tilde{P}-\tilde{\Pi})^{j} r \nonumber\\
=\,& \epsilon n \tilde{\pi} \tilde{P}^{(1)} (I-\tilde{P}+\tilde{\Pi})^{-1}  r + O(\epsilon^k) - \epsilon\sum_{j=0}^{n-2}(j+1) \tilde{\pi} \tilde{P}^{(1)}(\tilde{P}-\tilde{\Pi})^j  r \nonumber \\
=\, & \epsilon n \tilde{\pi} \tilde{P}^{(1)} (I-\tilde{P}+\tilde{\Pi})^{-1}  r - \epsilon \tilde{\pi}\tilde{P}^{(1)}(I-\tilde{P}+\tilde{\Pi})^{-2}r + O(\epsilon^k) \label{eq:286B}
\end{align}
as $\epsilon\downarrow 0$, for each $k\ge 1$. Note, the first step uses part $ii)$ of Proposition \ref{prop:1}. Combining (\ref{eq:286A}), (\ref{eq:286B}), (\ref{eq:286C}), and the fact that the second term vanishes, yields the theorem.  
\end{proof}

We turn next to an approximation that is appropriate for larger value of $n$. While the first approximation effectively ``Taylor expands" in terms of $P_1 = P_1(\epsilon)$, the second ``Taylor expands" in terms of $P_n = P_n(\epsilon)$. 

\begin{theorem}\label{thm:5}
	Suppose that $n=n(\epsilon)$ is such that $n/\log(1/\epsilon)\rightarrow\infty$ as $\epsilon\downarrow 0$. For $a(\epsilon)\rightarrow\infty$ as $\epsilon\downarrow 0$, let $m = m(\epsilon) = \lfloor \min(n/2, a(\epsilon)\log(1/\epsilon))\rfloor$. Assume there exist matrices $\tilde{P}$, $\tilde{P}^{(1)}$ for which 
	\[
	\sup_{0\le k \le m} \| P_{n-k}(\epsilon) - (\tilde{P}-\epsilon k\tilde{P}^{(1)}) \| = O(\epsilon^2 m^2)
	\]
	as $\epsilon\downarrow 0$, where $\tilde{P}$ is irreducible and aperiodic. Then, 
	\[
	\mu\prod_{k=1}^{n}P_k(\epsilon)r = \tilde{\pi}r  - \epsilon\tilde{\pi}\tilde{P}^{(1)} (I-\tilde{P}+\tilde{\Pi})^{-2}\tilde{P}r + O(\epsilon^2 m^3)
	\]
	as $\epsilon\downarrow0$, where $\tilde{\pi}$ is the stationary distribution of $\tilde{P}$ and $\tilde{\Pi}$ is the rank one matrix having rows identical to $\tilde{\pi}$.
\end{theorem}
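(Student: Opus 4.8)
The plan is to exploit the feature that distinguishes this result from Theorem \ref{thm:4}: here $n$ may be arbitrarily large, so the approximation ought to be insensitive to $P_1,\ldots,P_{n-m}$ and governed only by the last $m$ transition matrices, over which slow variation is assumed. Accordingly, I would first split the product at index $n-m$, writing $\eta = \mu\prod_{k=1}^{n-m}P_k(\epsilon)$, which is again a probability row vector (a product of stochastic matrices applied to $\mu$), so that $\mu\prod_{k=1}^{n}P_k(\epsilon)r = \eta\prod_{k=n-m+1}^{n}P_k(\epsilon)r$. The entire content of the proof is then to expand the ``window'' product $\prod_{k=n-m+1}^{n}P_k(\epsilon)$ and to show that, after $m$ steps of mixing, left-multiplication by the arbitrary distribution $\eta$ is indistinguishable from left-multiplication by $\tilde{\pi}$.

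Next I would establish the first-order expansion of the window product. Reindexing by $k=n-j$ and using the hypothesis $P_{n-k}(\epsilon)=\tilde{P}-\epsilon k\tilde{P}^{(1)}+O(\epsilon^2 m^2)$, the telescoping recursion of Proposition \ref{prop:2} (with the per-step multiplicative factor $(1+\epsilon m\|\tilde{P}^{(1)}\|)^m$ staying bounded because $\epsilon m^2\to0$ in the regime of interest) yields
\begin{equation*}
\prod_{k=n-m+1}^{n}P_k(\epsilon) = \tilde{P}^m - \epsilon\sum_{i=0}^{m-1} i\,\tilde{P}^{m-i-1}\tilde{P}^{(1)}\tilde{P}^{i} + O(\epsilon^2 m^3).
\end{equation*}
Note that the coefficient $i$ now pairs with the power $\tilde{P}^{i}$ standing to the \emph{right} of $\tilde{P}^{(1)}$, the mirror image of (\ref{eq:286C}); this is precisely the effect of ``Taylor expanding'' about the last step $P_n$ rather than the first. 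As in the proof of Theorem \ref{thm:4}, dividing the hypothesis by $\epsilon$ and letting $\epsilon\downarrow0$ (legitimate since $\epsilon m^2\to0$, and automatic in the smooth parametrization $P_i(\epsilon)=P((i-1)\epsilon)$) gives $\tilde{P}^{(1)}e=0$, whence $\tilde{P}^{(1)}\tilde{\Pi}=0$ and $(I-\tilde{P}+\tilde{\Pi})^{-1}e=e$; these identities drive all subsequent cancellations.

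I would then apply $\eta\,(\cdot)\,r$ term by term. For the leading term, $\eta\tilde{P}^m r=\tilde{\pi}r+\eta(\tilde{P}-\tilde{\Pi})^m r=\tilde{\pi}r+O(\epsilon^q)$ for every $q\ge1$, using $\eta\tilde{\Pi}=\tilde{\pi}$, $\|\eta\|=1$, and the geometric decay $\|(\tilde{P}-\tilde{\Pi})^m\|=O(\rho^m)$ with $\rho<1$ (aperiodicity), which is $o(\epsilon^q)$ because $m/\log(1/\epsilon)\to\infty$. For the first-order sum I would replace $\eta\tilde{P}^{m-i-1}$ by $\tilde{\pi}$: writing $\tilde{P}^{m-i-1}=(\tilde{P}-\tilde{\Pi})^{m-i-1}+\tilde{\Pi}$ for $i\le m-2$ (the boundary term $i=m-1$ being $O(m\rho^m)$) and $\tilde{P}^{i}r=(\tilde{P}-\tilde{\Pi})^{i}r+(\tilde{\pi}r)e$, the identity $\tilde{P}^{(1)}e=0$ annihilates every rank-one piece except $\tilde{\pi}\tilde{P}^{(1)}(\tilde{P}-\tilde{\Pi})^i r$, and a split of the residual double-power sum at $i=m/2$ (as in (\ref{eq:286A})) bounds the remainder by $O(m^2\rho^{m/2})=O(\epsilon^q)$. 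This collapses the first-order contribution to $\tilde{\pi}\tilde{P}^{(1)}\big(\sum_{i\ge1} i(\tilde{P}-\tilde{\Pi})^i\big)r+O(\epsilon^q)$, and Proposition \ref{prop:1}(ii) evaluates the bracket as $(I-\tilde{P}+\tilde{\Pi})^{-2}(\tilde{P}-\tilde{\Pi})$. Finally $(\tilde{P}-\tilde{\Pi})r=\tilde{P}r-(\tilde{\pi}r)e$, and the $(\tilde{\pi}r)e$ piece disappears because $(I-\tilde{P}+\tilde{\Pi})^{-2}e=e$ followed by $\tilde{P}^{(1)}e=0$, leaving exactly $-\epsilon\tilde{\pi}\tilde{P}^{(1)}(I-\tilde{P}+\tilde{\Pi})^{-2}\tilde{P}r$. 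Assembling the pieces against the $O(\epsilon^2 m^3)$ window error gives the claim.

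The main obstacle I anticipate is the rigorous discarding of the arbitrary prefix: one must show that left-multiplication by $\eta$ erases all dependence on $P_1,\ldots,P_{n-m}$, down to an error negligible against the surviving $O(\epsilon)$ correction. This hinges on coupling the ergodic decay $\rho^m=o(\epsilon^q)$ (guaranteed by $m/\log(1/\epsilon)\to\infty$) with the algebraic cancellations from $\tilde{P}^{(1)}e=0$. The delicate point is that the first-order sum carries powers of $\tilde{P}$ on \emph{both} sides of $\tilde{P}^{(1)}$, so the $i=m/2$ split is needed to force at least one exponent to be large enough to deliver geometric smallness uniformly in $i$.
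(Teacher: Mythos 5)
Your proposal is correct and follows essentially the same route as the paper's proof: expand the trailing window of length $m$ via the recursion behind Proposition \ref{prop:2}, collapse powers of $\tilde{P}$ to $\tilde{\Pi}$ using aperiodicity (with the split at $m/2$ to keep one exponent large), invoke $\tilde{P}^{(1)}e=0$ and Proposition \ref{prop:1}(ii) to sum the series, and observe that the prefix $P_1(\epsilon)\cdots P_{n-m}(\epsilon)$ is erased by stochasticity. The only cosmetic difference is that you absorb the prefix into the probability vector $\eta$ at the outset and use $\eta\tilde{\Pi}=\tilde{\pi}$, whereas the paper keeps the argument at the matrix level and discards the prefix at the very end via $A\tilde{\Pi}=\tilde{\Pi}$ for stochastic $A$; these are the same observation applied in a different order.
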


\begin{proof}{Proof of Theorem \ref{thm:5}.}
Proposition \ref{prop:2} implies that 
\begin{equation}
\left\|  \prod_{0\le k \le m} P_{n-k}(\epsilon) - \left(\tilde{{P}}^{m+1}  -\epsilon\sum_{j=1}^{m} j \tilde{P}^{m-j}\tilde{P}^{(1)} \tilde{P}^{j} \right)   \right\| = O(\epsilon^2 m^3) \label{eq:A287}
\end{equation}
as $\epsilon\downarrow0$. As in the proof of Theorem 4, we find that the aperiodicity and irreducibility of $\tilde{P}$ imply that $\| \tilde{P}^m - \tilde{\Pi} \| = O(\epsilon^k)$ as $\epsilon\downarrow 0$, for each $k\ge 1$. Similarly, 
\begin{align}
O(\epsilon^k) &= \left\| \sum_{1\le j\le m/2} j\tilde{P}^{m-j}\tilde{P}^{(1)}\tilde{P}^j - \sum_{1\le j\le m/2} j \tilde{\Pi}\tilde{P}^{(1)} \tilde{P}^j          \right\|   \nonumber\\
&= \left\| \sum_{1\le j\le m/2} j\tilde{\Pi}\tilde{P}^{(1)}\tilde{P}^j - \sum_{1\le j\le m/2} j \tilde{\Pi}\tilde{P}^{(1)} (\tilde{P}-\tilde{\Pi})^j          \right\|   \label{eq:B287}
\end{align}
as $\epsilon\downarrow 0$, for each $k\ge 1$, and 
\begin{align}
O(\epsilon^k) &= \left\| \sum_{1\le j\le m/2} j \tilde{\Pi}\tilde{P}^{(1)} (\tilde{P}-\tilde{\Pi})^j          -\sum_{j\ge 1} j\tilde{\Pi}\tilde{P}^{(1)} (\tilde{P}-\tilde{\Pi})^j     \right\|   \nonumber\\
&= \left\| \sum_{1\le j\le m/2} j \tilde{\Pi}\tilde{P}^{(1)} (\tilde{P}-\tilde{\Pi})^j -  \tilde{\Pi}\tilde{P}^{(1)} (I-\tilde{P}+\tilde{\Pi})^{-2}\tilde{P}       \right\|   \label{eq:C287}
\end{align}
as $\epsilon\downarrow0$, for each $k\ge 1$, where part $ii)$ of Proposition \ref{prop:1} was used in the last line. Relations (\ref{eq:A287}), (\ref{eq:B287}), and (\ref{eq:C287}) imply that 
\[
\left\|  \prod_{0\le k \le m} P_{n-k}(\epsilon) - \tilde{\Pi}+ \epsilon\tilde{\Pi}\tilde{P}^{(1)} (I-\tilde{P}+\tilde{\Pi})^{-2}\tilde{P}   \right\| = O(\epsilon^2 m^3) 
\]
as $\epsilon\downarrow0$. Because $A\tilde{\Pi} = \tilde{\Pi}$ for any stochastic matrix $A$, it follows that
\begin{align*}
O(\epsilon^2 m^3)  &= \left\| \prod_{m<k< n} P_{n-k}(\epsilon)  \left(\prod_{0\le k \le m} P_{n-k}(\epsilon) - \tilde{\Pi}+ \epsilon\tilde{\Pi}\tilde{P}^{(1)} (I-\tilde{P}+\tilde{\Pi})^{-2}\tilde{P}  \right) \right\|\\
&= \left\| \prod_{k=1}^{n}P_k(\epsilon) - \tilde{\Pi}+ \epsilon\tilde{\Pi } \tilde{P}^{(1)}(I-\tilde{P}+\tilde{\Pi})^{-2}\tilde{P} \right\|,
\end{align*}
proving the theorem. 
\end{proof}

We note that when $n/(\log(1+\epsilon))^{1+\delta}\rightarrow\infty$ as $\epsilon\downarrow 0$ for some $\delta > 0$, we can always choose $m = (\log(1+\epsilon))^{1+\delta}$ ensuring that our error term $O(\epsilon^2 m^3)$ is of smaller order than these correction terms of order $\epsilon$.  

Theorem 5 makes no assumptions whatsoever on the $P_k(\epsilon)$'s for $k$ outside a ``logarithmic neighborhood" of time epoch $n$ (outside $[n-a(\epsilon)\log(1/\epsilon),n]$), and $n$ can grow arbitrarily rapidly as a function of $\epsilon$. In particular, the assumptions of Theorem 5 hold when $P_{n-k}(\epsilon) = P(1-k\epsilon)$, where $(P(\theta):-\infty<\theta<\infty)$ is such that $P(\cdot)$ is twice continuously differentiable in a neighborhood of $1$, with $P(1)$ aperiodic and irreducible. 

Given a family $(P_j: j\ge 1)$ of transition matrices, Theorem 5 suggests approximating $\chi_n$ via 
\[
\pi_n r - \pi_n \left(\frac{P_n - P_{n-j}}{j}\right)(I-P_n+\Pi_n)^{-2}P_nr,
\]
for some user-defined choice of difference increment $j\ge 1$, where $\pi_n$ is the stationary distribution of $P_n$ (assumed irreducible and aperiodic), and $\Pi_n$ is the rank one matrix having identical rows equal to $\pi_n$. We note that the first-order correction  $\chi_{n1} \triangleq -\epsilon\pi_n P_n^{(1)}(I-P_n+\Pi_n)^{-2} P_n r $ can be computed by solving the linear system 
\begin{align}
(I-P_n+\Pi_n)h_{n1} & =P_nr, \nonumber\\
(I-P_n+\Pi_n)h_{n2} & =h_{n1}, \label{eq:24}
\end{align}
and setting $\chi_{n1} = -\pi_n P_n^{(1)}h_{n2}$. The fact that the coefficient matrices for these two linear systems are identical simplifies both numerical and closed form computation. 

By pre-multiplying (\ref{eq:24}) by $\pi_n$, we conclude that $\pi_n r = \pi_n h_{n1} = \pi_n h_{n2}$. So, we may re-write (\ref{eq:24}) as 
\begin{align}
(I-P_n)h_{n1} & =P_nr - \pi_n r e, \nonumber\\
(I-P_n)h_{n2} & =h_{n1} - \pi_n h_{n1} e, \label{eq:25}
\end{align}
where $e=(1,\ldots,1)^\top$ is the column vector consisting $1$'s. We recognize (\ref{eq:25}) as two \textit{Poisson equations} for the Markov chain having stationary transition matrices $P_j = P_n$ for $j\ge 1$. This is a discrete-time function analog to the first order term in the uniform acceleration (UA) asymptotic deduced by \cite{massey1998uniform}. (Their result is obtained for finite state Markov jump processes and focuses on the probability mass function version of Poisson's equation, where the unknown appears as a row vector pre-multiplying $(I-P_n)$, rather than the function version of Poisson's equation.) In addition to extending the theory to discrete time, our result makes clear that the approximation applies in much greater generality than the previous literature suggests. In particular, this first-order refinement holds whenever $n$ is large and $\epsilon$ is small, with no serious restriction on how large $n$ must be relative to $1/\epsilon$ (other than the very mild requirement that $n$ be large relative to $(\log(1/\epsilon))$). In contrast, we note that the uniform acceleration asymptotic relies on a time scaling of order $1/\epsilon$ in its derivation. In addition, our argument makes clear that only the transition matrices in a logarithmic neighborhood of the time $n$ under consideration play a role in the validity of the approximation.

We now provide a second-order refinement for $\chi_n$, based on ``Taylor expanding" in terms of $P_n = P_n(\epsilon)$. As with the first order refinement, it corresponds to a discrete time analog to the second order term in the uniform acceleration asymptotic expansion due to \cite{massey1998uniform}. (The proof of Theorem \ref{thm:6}	 is omitted, given the similarity to that of Theorem \ref{thm:5}.)
\begin{theorem}\label{thm:6}
	Suppose that $n=n(\epsilon)$ is such that $n/(\log(1/\epsilon))\rightarrow\infty$, and let $m=m(\epsilon)$ be defined as in Theorem 5. Assume there exist matrices $\tilde{P}$, $\tilde{P}^{(1)}$, and $\tilde{P}^{(2)}$ for which 
	\[
	\sup_{0\le k \le m} \left\|  P_{n-k}(\epsilon) - \left(\tilde{P} - \epsilon k \tilde{P}^{(1)} + \frac{\epsilon^2}{2}k^2\tilde{P}^{(2)}\right)  \right\| = O(\epsilon^3 m^3)
	\]
	as $\epsilon\downarrow0$, with $\tilde{P}$ irreducible and aperiodic. Then,
	\begin{align*}
	\mu \prod_{k=1}^{n}P_k(\epsilon)r =\,& \tilde{\pi} r - \epsilon\tilde{\pi} \tilde{P}^{(1)}(I-\tilde{P}+\tilde{\Pi})^{-1}\tilde{P}r + \epsilon^2 \tilde{\pi} \tilde{P}^{(1)} (I-\tilde{P}+\tilde{\Pi})^{-2} \tilde{P}^{(1)}(I-\tilde{P}+\tilde{\Pi})^{-2}\tilde{P} r\\
	&+ \epsilon^2 \left(\frac{1}{2}\tilde{\pi} \tilde{P}^{(2)} + \tilde{\pi} \tilde{P}^{(1)}(I-\tilde{P}+\tilde{\Pi})^{-1}\tilde{P}^{(1)}\right) \cdot \left[2(I-\tilde{P}+\tilde{\Pi})^{-3} - (I-\tilde{P}+\tilde{\Pi})^{-2} \right]\tilde{P}r\\
	& + O(\epsilon^3 m^4)
	\end{align*}
	as $\epsilon\downarrow0$, where $\tilde{\pi}$ is the stationary distribution of $\tilde{P}$ and $\tilde{\Pi}$ is the rank one matrix having rows identical to $\tilde{\pi}$.
\end{theorem}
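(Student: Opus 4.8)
The plan is to mirror the proof of Theorem \ref{thm:5}, but to carry the expansion to second order and handle the extra quadratic terms. First I would establish the second-order analog of Proposition \ref{prop:2} for the reversed product, namely
\[
\prod_{0\le k\le m}P_{n-k}(\epsilon) = \tilde P^{m+1} - \epsilon\sum_{j=1}^{m} j\,\tilde P^{m-j}\tilde P^{(1)}\tilde P^{j} + \frac{\epsilon^2}{2}\sum_{j=1}^{m} j^2\,\tilde P^{m-j}\tilde P^{(2)}\tilde P^{j} + \epsilon^2\!\!\sum_{1\le j<l\le m}\!\! lj\,\tilde P^{m-l}\tilde P^{(1)}\tilde P^{l-j-1}\tilde P^{(1)}\tilde P^{j} + O(\epsilon^3 m^4),
\]
uniformly in the relevant range. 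This follows from the recursion behind inequality (\ref{eq:A5}), now retaining the second-order increment $\frac{\epsilon^2}{2}k^2\tilde P^{(2)}$ and the products of two first-order increments; the hypothesis on $P_{n-k}(\epsilon)$ supplies the per-factor error $O(\epsilon^3 m^3)$, and summing over the $O(m)$ relevant indices (with the geometric prefactor controlled because $m = O(a(\epsilon)\log(1/\epsilon))$) yields the stated $O(\epsilon^3 m^4)$ remainder.

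Next I would record the algebraic identities that drive every simplification. Stochasticity of each $P_{n-k}(\epsilon)$ gives $\tilde P e = e$, $\tilde P^{(1)}e = 0$, and $\tilde P^{(2)}e = 0$, hence $\tilde P^{(1)}\tilde\Pi = \tilde P^{(2)}\tilde\Pi = 0$; together with the standard facts $\tilde\Pi\tilde P = \tilde P\tilde\Pi = \tilde\Pi$, $A\tilde\Pi = \tilde\Pi$ for stochastic $A$, $\tilde P^{k} = \tilde\Pi + (\tilde P-\tilde\Pi)^{k}$ for $k\ge 1$, and the commutation of $\tilde P$ with the fundamental matrix $Z = (I-\tilde P+\tilde\Pi)^{-1}$ (equivalently with $N = \tilde P-\tilde\Pi$ and all powers of $Z$). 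Because $m \gtrsim a(\epsilon)\log(1/\epsilon)$ and $\|N^{k}\|\to 0$ geometrically, I have $\|N^{\lfloor m/2\rfloor}\| = O(\epsilon^{k})$ for every $k$, which is the tool for collapsing large powers of $\tilde P$ to $\tilde\Pi$ and for extending truncated index ranges to $\infty$ at cost $O(\epsilon^{k})$.

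With these in hand, I would treat the four terms so that each reduces to an expression led by $\tilde\Pi$. The term $\tilde P^{m+1}$ collapses to $\tilde\Pi$, and the first-order term reduces exactly as in Theorem \ref{thm:5}. For the $\tilde P^{(2)}$ term I replace $\tilde P^{m-j}$ by $\tilde\Pi$, use $\tilde P^{(2)}\tilde\Pi = 0$ to rewrite $\tilde P^{j}$ as $N^{j}$, extend the sum, and evaluate $\sum_{j\ge 1} j^2 N^{j}$ through part ii) of Proposition \ref{prop:1}, producing the factor $[2Z^{3}-Z^{2}]\tilde P$. The cross term is the crux: substituting $\tilde P^{a} = \tilde\Pi + N^{a}$ in all three power blocks, the configurations placing $\tilde\Pi$ in the middle or trailing block vanish by $\tilde P^{(1)}\tilde\Pi = 0$, while the configuration placing $N^{m-l}$ in the leading block is $O(\epsilon^{k})$ because the three exponents sum to $m-1$ and so at least one exceeds $(m-1)/3$; only $\tilde\Pi\tilde P^{(1)}N^{l-j-1}\tilde P^{(1)}N^{j}$ survives. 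Reindexing with $i = l-j-1\ge 0$ (so $l = i+j+1$), splitting the weight $lj$, extending both sums, and applying Proposition \ref{prop:1} to the decoupled factors $\sum_i i^{a}N^{i}$ and $\sum_j j^{b}N^{j}$, then folding via $\tilde P^{(1)}N = \tilde P^{(1)}\tilde P$ and $\tilde P Z = Z\tilde P$, yields the two cross contributions. Finally, premultiplying by the stochastic prefix $\prod_{m<k<n}P_{n-k}(\epsilon)$ (which fixes the leading $\tilde\Pi$ via $A\tilde\Pi = \tilde\Pi$) and by $\mu$ (with $\mu\tilde\Pi = \tilde\pi$), and postmultiplying by $r$, assembles the stated formula, the common factor $[2Z^{3}-Z^{2}]\tilde P$ exhibiting how the $\tilde P^{(2)}$ and one cross contribution combine.

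The main obstacle is the cross term: correctly enumerating which of the eight block configurations survive, reindexing the constrained double sum, and applying Proposition \ref{prop:1} twice while tracking the noncommuting insertions of $\tilde P^{(1)}$ between powers of $N$ and $Z$. The bookkeeping is delicate — the interplay of $\tilde P^{(1)}N = \tilde P^{(1)}\tilde P$, $\tilde P^{(1)}\tilde\Pi = 0$, and $\tilde P Z = Z\tilde P$ must be applied in the right order — and it is this step, together with verifying that the first-order coefficient agrees with the one produced in Theorem \ref{thm:5}, that demands the most care.
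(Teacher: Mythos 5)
The paper never prints a proof of Theorem \ref{thm:6} (it is explicitly omitted as ``similar to Theorem \ref{thm:5}''), and your proposal is precisely the intended argument: a second-order version of the reversed-product expansion, collapse of the leading power to $\tilde{\Pi}$ via geometric ergodicity, the annihilation identities $\tilde{P}^{(1)}\tilde{\Pi}=\tilde{P}^{(2)}\tilde{\Pi}=0$, and Proposition \ref{prop:1} ii) applied to $\sum_j jN^j$, $\sum_j j^2N^j$ and the decoupled cross sums, where $N=\tilde{P}-\tilde{\Pi}$ and $Z=(I-\tilde{P}+\tilde{\Pi})^{-1}$. Your enumeration of the surviving block configurations in the cross term is right, and carrying out your plan does reproduce the three $\epsilon^2$ terms in the statement.

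Two points, however, need to be confronted rather than deferred. First, your reduction of the first-order sum ``exactly as in Theorem \ref{thm:5}'' gives $-\epsilon\,\tilde{\pi}\tilde{P}^{(1)}NZ^2 = -\epsilon\,\tilde{\pi}\tilde{P}^{(1)}(I-\tilde{P}+\tilde{\Pi})^{-2}\tilde{P}r$, whereas the printed Theorem \ref{thm:6} shows $(I-\tilde{P}+\tilde{\Pi})^{-1}\tilde{P}r$. These cannot both hold: the hypothesis of Theorem \ref{thm:6} implies that of Theorem \ref{thm:5} (absorb $\tfrac{\epsilon^2}{2}k^2\tilde{P}^{(2)}$ into an $O(\epsilon^2m^2)$ error), so the order-$\epsilon$ coefficients must agree, and your computation shows the exponent $-2$ is the correct one; the $-1$ in the statement is evidently a misprint. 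You flag ``verifying that the first-order coefficient agrees'' as delicate but never resolve it — as written, your proof establishes a statement differing from the printed one in that term, and a complete write-up must say so. Second, your error bound is under-justified: ``per-factor error $O(\epsilon^3m^3)$ times $O(m)$ factors'' controls only the replacement error. Multiplying out the quadratic approximations also produces third- and fourth-order overflow terms — triples of first-order increments, and products of a first-order with a second-order increment — whose crude norm bounds are of order $\epsilon^3m^6$ and $\epsilon^3m^5$, not $\epsilon^3m^4$. To reach the claimed $O(\epsilon^3m^4)$ you must propagate the geometric decay \emph{inside} the recursion: every power of $\tilde{P}$ that follows a $\tilde{P}^{(1)}$ or $\tilde{P}^{(2)}$ may be replaced by the corresponding power of $N$, so the accumulated first- and second-order sums have norms $O(\epsilon m)$ and $O(\epsilon^2m^2)$ rather than $O(\epsilon m^2)$ and $O(\epsilon^2m^4)$. (For the regime in which the theorem is used, $m$ polylogarithmic in $1/\epsilon$, the cruder bound is still $o(\epsilon^2)$, but it does not give the theorem as stated.)
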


As with Theorem 5, one possible choice for $m$ is $m=(\log(1/\epsilon))^{1+\delta}$ for some $\delta >0$, in which case the error term $O(\epsilon^3 m^4)$ is of smaller order than the correction term of order $\epsilon^2$. 

We close this section by noting that our arguments generalize (with essentially no changes to the proofs) to continuous state space Markov chains, provided that we suitably generalize the norms that are used. In particular, if $S$ is a general state space, we use the definitions 
\begin{align*}
&\|\eta\| = \sup_{B\subset S} |\eta(B)|,\\
&\|A\| = \sup_{x\in S} \|A(x,\cdot)\|,\\
&\|f\| = \sup_{x\in S} |f(x)|,
\end{align*}
for finite (signed) measures $\eta$, kernels $A$ (so that $A(x,\cdot)$ is a finite (signed) measure for each $x\in S$), and functions $f$. (Strictly speaking, our supremum over $B$ is over measurable subsets of $S$, and we require that $A$ and $f$ be suitably measurable.)

With these definitions in hand, Theorem 5 (for example) generalizes as follows: 
\begin{theorem}
	Suppose that $n/\log(1/\epsilon)\rightarrow\infty$ as $\epsilon\downarrow0$, and let $m$ be defined as in Theorem 5. Assume there exist kernels $\tilde{P}$ and $\tilde{P}^{(1)}$ for which
	\[
	\sup_{0\le k \le m} \| P_{n-k}(\epsilon) - (\tilde{P}-\epsilon k\tilde{P}^{(1)}) \| = O(\epsilon^2 m^2)
	\]
	as $\epsilon\downarrow 0$, where $\tilde{P}$ has a stationary distribution $\tilde{\pi}$. Let $\tilde{\Pi}$ be the kernel for which $\tilde{\Pi}(x,dy) = \tilde{\pi}(dy)$ for each $x,y\in S$, and suppose there exists $l\ge 1$ such that 
	\begin{equation}
	\|\tilde{P}^l - \tilde{\Pi}\| < 1. \label{eq:matrixassm}
	\end{equation}
	Then, $(I-\tilde{P}+\tilde{\Pi})$ has an inverse on the space of bounded (measurable) functions on $S$, and 
	\[
	\mu \prod_{k=1}^{n} P_k(\epsilon)r = \tilde{\pi} r -\epsilon \tilde{\pi}\tilde{P}^{(1)} (I-\tilde{P}+\tilde{\Pi})^{-2} \tilde{P} r + O(\epsilon^2 m^3)
	\]
	as $\epsilon\downarrow0$, provided $\|r\|<\infty.$
\end{theorem}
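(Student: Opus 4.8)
The plan is to follow the proof of Theorem~\ref{thm:5} essentially verbatim, now reading every product of kernels as a composition of operators on bounded measurable functions and every matrix norm as the generalized norms just introduced. Since the inequalities $\|AB\|\le\|A\|\,\|B\|$, $\|Af\|\le\|A\|\,\|f\|$, and $\|A\|=1$ for a (sub-)stochastic kernel $A$ continue to hold in this setting, the estimate of Proposition~\ref{prop:2} carries over without change, giving
\[
\Bigl\| \prod_{0\le k\le m} P_{n-k}(\epsilon) - \Bigl(\tilde P^{m+1} - \epsilon\sum_{j=1}^m j\,\tilde P^{m-j}\tilde P^{(1)}\tilde P^j\Bigr)\Bigr\| = O(\epsilon^2 m^3)
\]
as $\epsilon\downarrow 0$. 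The only genuinely new work is (a) to justify that $(I-\tilde P+\tilde\Pi)$ is invertible on bounded measurable functions, a fact that was automatic from aperiodicity and irreducibility in the finite-state case but must now be extracted from hypothesis \eqref{eq:matrixassm}, and (b) to re-derive the operator identities that drove the finite-state manipulations.

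First I would record the algebra of $\tilde\Pi$. Because $\tilde\Pi(x,dy)=\tilde\pi(dy)$, for any bounded $f$ the function $\tilde\Pi f$ is the constant $\tilde\pi f$; stationarity of $\tilde\pi$ gives $\tilde\Pi\tilde P=\tilde\Pi$, the stochasticity of $\tilde P$ gives $\tilde P\tilde\Pi=\tilde\Pi$ (indeed $A\tilde\Pi=\tilde\Pi$ for every stochastic $A$), and $\tilde\Pi^2=\tilde\Pi$. An induction on these identities yields $(\tilde P-\tilde\Pi)^k=\tilde P^k-\tilde\Pi$ for $k\ge 1$. Hypothesis \eqref{eq:matrixassm} then reads $\|(\tilde P-\tilde\Pi)^l\|<1$, so Proposition~\ref{prop:1}(i), applied to $A=\tilde P-\tilde\Pi$, simultaneously delivers the geometric decay $\|(\tilde P-\tilde\Pi)^k\|=\|\tilde P^k-\tilde\Pi\|\to 0$ and the Neumann representation $\sum_{k\ge 0}(\tilde P-\tilde\Pi)^k=(I-\tilde P+\tilde\Pi)^{-1}$, establishing the asserted invertibility on the bounded measurable functions.

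With these tools in place, the remainder is the argument of Theorem~\ref{thm:5}. Since $m/\log(1/\epsilon)\to\infty$, the geometric decay forces $\|\tilde P^m-\tilde\Pi\|=O(\epsilon^k)$ for every $k$, so $\tilde P^{m+1}$ may be replaced by $\tilde\Pi$ at negligible cost. In the remaining sum I would split at $j=m/2$, replace $\tilde P^{m-j}$ by $\tilde\Pi$ for $j\le m/2$, and then convert $\tilde P^j$ to $(\tilde P-\tilde\Pi)^j$ using $\tilde P^j=(\tilde P-\tilde\Pi)^j+\tilde\Pi$ together with $\tilde P^{(1)}\tilde\Pi=0$; the latter holds because $\tilde P^{(1)}e=0$ (both $P_{n-k}(\epsilon)$ and $\tilde P$ fix the constant function $e\equiv 1$, so the rows of $\tilde P^{(1)}$ are signed measures of total mass zero) and $\tilde\Pi f$ is a constant multiple of $e$. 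Part~(ii) of Proposition~\ref{prop:1} then evaluates $\sum_{j\ge 1} j\,\tilde\Pi\tilde P^{(1)}(\tilde P-\tilde\Pi)^j = \tilde\Pi\tilde P^{(1)}(I-\tilde P+\tilde\Pi)^{-2}\tilde P$. Left-multiplying the resulting approximation by $\prod_{m<k<n}P_{n-k}(\epsilon)$ and using $A\tilde\Pi=\tilde\Pi$ leaves both terms unchanged, and finally pre-multiplying by $\mu$ and using $\mu\tilde\Pi=\tilde\pi$ produces the stated expansion with error $O(\epsilon^2 m^3)$.

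I expect the main obstacle to be step (a): in the finite-dimensional Theorem~\ref{thm:5} the existence of $(I-\tilde P+\tilde\Pi)^{-1}$ and the geometric convergence $\tilde P^k\to\tilde\Pi$ came for free from aperiodicity and irreducibility, whereas on a general state space these can fail, which is precisely why hypothesis \eqref{eq:matrixassm} is imposed. The subtlety is that \eqref{eq:matrixassm} is a statement in the total-variation operator norm, so I must apply Proposition~\ref{prop:1}(i) inside the Banach algebra of bounded kernels under this norm and check that the resulting inverse acts on the space of bounded measurable functions (which needs only $\|r\|<\infty$, as assumed). Once the contraction property of $(\tilde P-\tilde\Pi)^l$ is secured, everything else is a transcription of the finite-state calculation, since no appeal to the finiteness of $S$ enters any of the norm estimates.
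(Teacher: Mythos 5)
Your proposal is correct and follows exactly the route the paper intends: the paper gives no separate proof of this theorem, stating only that the finite-state arguments (i.e., the proof of Theorem \ref{thm:5}) ``generalize with essentially no changes'' once the norms are reinterpreted, which is precisely your transcription. The two points you single out as genuinely new --- deducing invertibility of $(I-\tilde P+\tilde\Pi)$ and geometric decay of $\|\tilde P^k-\tilde\Pi\|$ from the Doeblin-type hypothesis \eqref{eq:matrixassm} via Proposition \ref{prop:1}(i) applied to $A=\tilde P-\tilde\Pi$ (using $(\tilde P-\tilde\Pi)^k=\tilde P^k-\tilde\Pi$), and re-deriving $\tilde P^{(1)}\tilde\Pi=0$ from the hypothesis applied to constant functions --- are exactly the details the paper leaves implicit, and you handle them correctly.
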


The assumption (\ref{eq:matrixassm}) on the transition kernel $\tilde{P}$ is identical to assuming that $\tilde{P}$ is aperiodic and uniformly ergodic (or, equivalently, that $\tilde{P}$ is \textit{Doeblin}; see \cite{doob1953stochastic}).

\section{Approximating Cumulative Reward}
In this section, we develop an approximation for 
\[
\tau_n = \E \sum_{j=0}^{n-1} r(X_j),
\]
when $X$ is slowly changing. Our approximation relies on the first approximation of Section 4, in which $\E r(X_j)$ is approximated by ``Taylor expanding" in terms of the $P_1$ dynamics. 

\begin{theorem}\label{thm:8}
	Suppose that $n=n(\epsilon)\rightarrow\infty$, so that $n/\log(1/\epsilon)\rightarrow\infty$ and $n= o(\epsilon^{-1/4})$ as $\epsilon\downarrow0$. Assume that there exists $\delta>0$ and matrices $\tilde{P}$, $\tilde{P}^{(1)}$ for which
	\[
	\sup_{1\le i \le n} \| P_i(\epsilon) - (\tilde{P} + \epsilon(i-1)\tilde{P}^{(1)})\|  = O(\epsilon^2 n^2)
	\]
	as $\epsilon\downarrow 0$, where $\tilde{P}$ is aperiodic and irreducible. Then, 
	\begin{align*}
	\sum_{j=0}^{n-1}\mu\prod_{k=1}^{j}P_k(\epsilon)r =\, & (n-1)\tilde{\pi} r + \mu (I-\tilde{P}+\tilde{\Pi})^{-1} r + \epsilon \mu \tilde{P} (I-\tilde{P}+\tilde{\Pi})^{-2}\tilde{P}^{(1)}(I-\tilde{P}+\tilde{\Pi})^{-1} r \\
	& + \epsilon \frac{(n-1)(n-2)}{2}\tilde{\pi}\tilde{P}^{(1)}(I-\tilde{P}+\tilde{\Pi})^{-1} r -\epsilon (n-1) \tilde{\pi} \tilde{P}^{(1)} (I-\tilde{P}+\tilde{\Pi})^{-2}\tilde{P} r \\
	& + \epsilon \tilde{\pi} \tilde{P}^{(1)}(I-\tilde{P}+\tilde{\Pi})^{-3}\tilde{P} r + o(\epsilon)
	\end{align*}
	as $\epsilon\downarrow0$, where $\tilde{\pi}$ is the stationary distribution of $\tilde{P}$ and $\tilde{\Pi}$ is the rank one matrix having rows identical to $\tilde{\pi}$.
\end{theorem}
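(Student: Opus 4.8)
The plan is to reduce the cumulative sum to the two building blocks already controlled in Section~4 and then to evaluate the resulting $\tilde{P}$-only sums by means of the fundamental matrix. Throughout I would abbreviate $Z = (I-\tilde{P}+\tilde{\Pi})^{-1} = \sum_{k\ge 0}(\tilde{P}-\tilde{\Pi})^k$ and $Q = \tilde{P}-\tilde{\Pi}$, and use the standard identities $\tilde{P}^k = \tilde{\Pi}+Q^k$ for $k\ge 1$, $\tilde{\Pi}Q=Q\tilde{\Pi}=0$, $\tilde{\Pi}Z=Z\tilde{\Pi}=\tilde{\Pi}$, $QZ=Z-I$, together with $\mu\tilde{\Pi}=\tilde{\pi}$ and $\tilde{P}^{(1)}\tilde{\Pi}=0$ (the last shown in the proof of Theorem~\ref{thm:4} from $\tilde{P}^{(1)}e=0$).

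First I would establish a per-horizon expansion uniform in $j$. The hypothesis on $\sup_{1\le i\le n}\|P_i(\epsilon)-(\tilde{P}+\epsilon(i-1)\tilde{P}^{(1)})\|$ is exactly of the form used in the proofs of Proposition~\ref{prop:2} and Theorem~\ref{thm:4}, so the recursion \eqref{eq:A5} yields $f_m = O(\epsilon^2 n^3)$ uniformly for $1\le m\le n$ (this uses $\epsilon n^2\to 0$, guaranteed by $n=o(\epsilon^{-1/4})$). As in \eqref{eq:286C}, this gives, uniformly in $0\le j\le n-1$,
\[
\mu\prod_{k=1}^{j}P_k(\epsilon)r = \mu\tilde{P}^j r + \epsilon\,\mu\sum_{i=1}^{j-1} i\,\tilde{P}^i\tilde{P}^{(1)}\tilde{P}^{j-i-1}r + O(\epsilon^2 n^3).
\]
Summing over $j$, the accumulated error is $n\cdot O(\epsilon^2 n^3)=O(\epsilon^2 n^4)=o(\epsilon)$, which is precisely why $n=o(\epsilon^{-1/4})$ is assumed. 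Hence $\tau_n = S_0 + \epsilon S_1 + o(\epsilon)$, where $S_0 = \sum_{j=0}^{n-1}\mu\tilde{P}^j r$ and $S_1 = \sum_{j=0}^{n-1}\mu\sum_{i=1}^{j-1} i\,\tilde{P}^i\tilde{P}^{(1)}\tilde{P}^{j-i-1}r$. Evaluating $S_0$ via $\tilde{P}^j=\tilde{\Pi}+Q^j$ and the geometric decay of $Q^j$ (whose tails are $O(\epsilon^k)$ since $n/\log(1/\epsilon)\to\infty$) gives $S_0 = (n-1)\tilde{\pi}r + \mu Z r + O(\epsilon^k)$, the first two terms of the claim.

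For $S_1$ I would interchange the order of summation; writing $l=j-i-1$,
\[
S_1 = \mu\sum_{i=1}^{n-2} i\,\tilde{P}^i\,\tilde{P}^{(1)}\sum_{l=0}^{n-i-2}\tilde{P}^l\, r .
\]
The key point is $\tilde{P}^{(1)}\tilde{\Pi}=0$, which annihilates the diverging $\tilde{\Pi}$-part of the inner sum and leaves $\tilde{P}^{(1)}\sum_{l=0}^{M}\tilde{P}^l = \tilde{P}^{(1)}\sum_{l=0}^{M}Q^l$. Substituting $\tilde{P}^i=\tilde{\Pi}+Q^i$ then splits $S_1$ into a $\tilde{\Pi}$-part and a $Q$-part. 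In the $\tilde{\Pi}$-part, $\mu\tilde{\Pi}=\tilde{\pi}$ factors out and one is left with $\tilde{\pi}\tilde{P}^{(1)}\big[\sum_{m\ge 0}c_m Q^m\big]r$, where $c_m=\binom{n-1-m}{2}$ is the total weight of $Q^m$; expanding $c_m$ as a quadratic in $m$ and applying Proposition~\ref{prop:1}(ii) to evaluate $\sum_m Q^m=Z$, $\sum_m m Q^m=Z^2-Z$, and $\sum_m m^2 Q^m=2Z^3-3Z^2+Z$ produces the $n^2$-order term $\tfrac{(n-1)(n-2)}{2}\tilde{\pi}\tilde{P}^{(1)}Z r$, the $n$-order term $-(n-1)\tilde{\pi}\tilde{P}^{(1)}(Z^2-Z)r$, and an $O(1)$ term $\tilde{\pi}\tilde{P}^{(1)}(Z^3-Z^2)r$. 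In the $Q$-part the outer factor $Q^i$ is itself summable, so the inner sum may be replaced by $Z$ up to a negligible geometric tail, giving $\mu\big[\sum_{i\ge 1} iQ^i\big]\tilde{P}^{(1)}Z r=\mu(Z^2-Z)\tilde{P}^{(1)}Z r$.

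Finally I would recombine using the identities $Z^2\tilde{P}=Z^2-Z+\tilde{\Pi}$, $Z^3\tilde{P}=Z^3-Z^2+\tilde{\Pi}$, and $\tilde{P}Z^2=Z^2-Z+\tilde{\Pi}$, together with $\tilde{P}^{(1)}\tilde{\Pi}=0$ and $\mu\tilde{\Pi}=\tilde{\pi}$, to fold the $n$-order term into $-(n-1)\tilde{\pi}\tilde{P}^{(1)}Z^2\tilde{P}r$, the $\tilde{\Pi}$-part $O(1)$ term into $\tilde{\pi}\tilde{P}^{(1)}Z^3\tilde{P}r$, and the $Q$-part into the $\mu\tilde{P}Z^2\tilde{P}^{(1)}Z r$ form appearing in the statement. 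The main obstacle is exactly this last step: one must carry all three growth orders ($n^2$, $n$, and $O(1)$) simultaneously and verify that every truncation and geometric tail is $o(1)$ after multiplication by $\epsilon$. The constant-order terms are the delicate ones, since their precise coefficients depend sensitively on the repeated use of the fundamental-matrix identities (for instance $\mu\tilde{P}Z^2 = \mu(Z^2-Z)+\tilde{\pi}$); this is where the bookkeeping must be checked most carefully to confirm that no stray $O(1)$ contribution survives.
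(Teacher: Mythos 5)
Your strategy is the same as the paper's own proof: the per-horizon expansion inherited from Theorem \ref{thm:4} via the recursion (\ref{eq:A5}), summation over $j$ with aggregate error $O(\epsilon^2 n^4)=o(\epsilon)$, interchange of the two sums as in (\ref{eq:288A}), separation of the $\tilde{\Pi}$ component, and evaluation of the resulting power series through Proposition \ref{prop:1}. Your $\tilde{\Pi}$-part bookkeeping is correct: writing $Q=\tilde{P}-\tilde{\Pi}$ and $Z=(I-\tilde{P}+\tilde{\Pi})^{-1}$, the weights $c_m=\binom{n-1-m}{2}$, the identities $\sum_{m\ge0}mQ^m=Z^2-Z$ and $\sum_{m\ge0}m^2Q^m=2Z^3-3Z^2+Z$, and the three resulting contributions reproduce exactly the theorem's fourth, fifth, and sixth terms, since $\tilde{\pi}\tilde{P}^{(1)}(Z^2-Z)r=\tilde{\pi}\tilde{P}^{(1)}Z^2\tilde{P}r$ and $\tilde{\pi}\tilde{P}^{(1)}(Z^3-Z^2)r=\tilde{\pi}\tilde{P}^{(1)}Z^3\tilde{P}r$ by $\tilde{P}^{(1)}\tilde{\Pi}=0$. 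The tail arguments are also sound.

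The step that does not go through is the final one, and you flagged it yourself. Your $Q$-part equals $\epsilon\mu(Z^2-Z)\tilde{P}^{(1)}Zr=\epsilon\mu(\tilde{P}-\tilde{\Pi})Z^2\tilde{P}^{(1)}Zr$, and it cannot be ``folded'' into the theorem's third term $\epsilon\mu\tilde{P}Z^2\tilde{P}^{(1)}Zr$: by the very identity you quote, $\mu\tilde{P}Z^2=\mu(Z^2-Z)+\tilde{\pi}$, the two expressions differ by $\epsilon\tilde{\pi}\tilde{P}^{(1)}Zr$, and the hypotheses do not make this vanish (they give $\tilde{P}^{(1)}e=0$, hence $\tilde{P}^{(1)}\tilde{\Pi}=0$, i.e.\ annihilation on the right, not $\tilde{\pi}\tilde{P}^{(1)}=0$). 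So the stray $O(1)$ contribution you worried about genuinely survives. The resolution, however, is that your computation --- not the displayed formula --- appears to be the correct one. The paper obtains $\mu\tilde{P}Z^2$ from the decomposition $\tilde{P}^i=\tilde{P}(\tilde{P}-\tilde{\Pi})^{i-1}+\tilde{\Pi}$, which is valid for $i\ge2$ but double counts $\tilde{\Pi}$ at $i=1$ (there $\tilde{P}Q^{0}+\tilde{\Pi}=\tilde{P}+\tilde{\Pi}\neq\tilde{P}$), whereas your decomposition $\tilde{P}^i=Q^i+\tilde{\Pi}$ is exact for every $i\ge1$; the slip adds precisely $\epsilon\tilde{\pi}\tilde{P}^{(1)}\sum_{l=0}^{n-3}Q^l r=\epsilon\tilde{\pi}\tilde{P}^{(1)}Zr+O(\epsilon\,\epsilon^{k})$ to the paper's answer. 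A clean consistency check: take $\mu=\tilde{\pi}$. Then $\mu\tilde{P}^i=\tilde{\pi}$ for all $i$, so the first-order part of the cumulative sum is exactly your $\tilde{\Pi}$-part (the theorem's terms four through six), while the theorem's third term becomes $\epsilon\tilde{\pi}\tilde{P}Z^2\tilde{P}^{(1)}Zr=\epsilon\tilde{\pi}\tilde{P}^{(1)}Zr\neq0$ in general, so the stated expansion cannot be exact. Your proof is therefore complete and correct if you simply stop with the third term in the form $\epsilon\mu(\tilde{P}-\tilde{\Pi})(I-\tilde{P}+\tilde{\Pi})^{-2}\tilde{P}^{(1)}(I-\tilde{P}+\tilde{\Pi})^{-1}r$; do not attempt to force it into the form printed in the statement, which exceeds the true value by $\epsilon\tilde{\pi}\tilde{P}^{(1)}(I-\tilde{P}+\tilde{\Pi})^{-1}r$.
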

\begin{proof}{Proof of Theorem \ref{thm:8}.}
As in the proof of Theorem 4, we find that the hypotheses guarantee that \[
\mu \prod_{i=1}^{j}P_i(\epsilon) = \mu \tilde{P}^j r + \epsilon \mu\sum_{i=1}^{j-1}i \tilde{P}^i \tilde{P}^{(1)}\tilde{P}^{j-1-i} r + O(\epsilon^2j^3)
\]
as $\epsilon\downarrow 0$, uniformly in $1\le j\le n$. Consequently, 
\begin{equation}
\sum_{j=0}^{n-1}\mu \prod_{i=1}^{j}P_i(\epsilon)r = \mu \sum_{j=0}^{n-1}\tilde{P}^j r + \epsilon\mu \sum_{i=1}^{n-2}i\tilde{P}^i \tilde{P}^{(1)} \sum_{j=i+1}^{n-1}\tilde{P}^{j-i-1} r + O(\epsilon^2n^4) \label{eq:288A}
\end{equation}
as $\epsilon\downarrow 0$. Since $n/\log(1/\epsilon) \rightarrow\infty$ and $\|\tilde{P}^n -\tilde{\Pi}\|\rightarrow 0$ geometrically fast, it follows that 
\begin{equation}
\mu\sum_{j=0}^{n-1}\tilde{P}^j r = (n-1)\tilde{\pi} r + \mu (I-\tilde{P}+\tilde{\Pi})^{-1} r + O(\epsilon^k)\label{eq:288B}
\end{equation}
as $\epsilon\downarrow 0$, for each $k\ge 1$. The second term on the right-hand side of (\ref{eq:288A}) equals 
\begin{align}
\nonumber	&\epsilon \mu \sum_{i=1}^{n-2} i \tilde{P} (\tilde{P}-\tilde{\Pi})^{i-1}\tilde{P}^{(1)}\sum_{j=0}^{n-2-i}(\tilde{P}-\tilde{\Pi})^j r + \epsilon\mu \sum_{i=1}^{n-2}i \tilde{\Pi}\tilde{P}^{(1)} \sum_{j=0}^{n-2-i}(\tilde{P}-\tilde{\Pi})^j r \\
\nonumber	=\,& \epsilon \mu \tilde{P}(I-\tilde{P}+\tilde{\Pi})^{-2}\tilde{P}^{(1)}(I-\tilde{P}+\tilde{\Pi})^{-1}r + \epsilon\sum_{i=1}^{n-2}i \tilde{\pi}\tilde{P}^{(1)}\sum_{j=0}^{n-2}(\tilde{P}-\tilde{\Pi})^j r\\
\nonumber	& -\epsilon \sum_{j=1}^{n-2}\sum_{i=n-1-j}^{n-2}i \tilde{\pi}\tilde{P}^{(1)}(\tilde{P}-\tilde{\Pi})^j r + O(\epsilon^k)\\
\nonumber	=\, & \epsilon\mu \tilde{P}(I-\tilde{P}+\tilde{\Pi})^{-2} \tilde{P}^{(1)}(I-\tilde{P}+\tilde{\Pi})^{-1} r + \epsilon\frac{(n-1)(n-2)}{2}\tilde{\pi}\tilde{P}^{(1)}(I-\tilde{P}+\tilde{\Pi})^{-1}r \\
\nonumber	& - \frac{\epsilon}{2} \sum_{j=1}^{n-2}\left[(2n-3)j - j^2 \right] \tilde{\pi}\tilde{P}^{(1)}(\tilde{P}-\tilde{\Pi})^j r + O(\epsilon^k)\\
=\, \nonumber& \epsilon\mu \tilde{P}(I-\tilde{P}+\tilde{\Pi})^{-2} \tilde{P}^{(1)}(I-\tilde{P}+\tilde{\Pi})^{-1} r + \epsilon\frac{(n-1)(n-2)}{2}\tilde{\pi}\tilde{P}^{(1)}(I-\tilde{P}+\tilde{\Pi})^{-1}r \\
\nonumber& - \frac{\epsilon}{2} (2n-3) \tilde{\pi}\tilde{P}^{(1)}(I-\tilde{P}+\tilde{\Pi})^{-2} \tilde{P}r + \frac{\epsilon}{2}\tilde{\pi}\tilde{P}^{(1)}\left[2(I-\tilde{P}+\tilde{\Pi})^{-3}- (I-\tilde{P}+\tilde{\Pi})^{-2}\right]\tilde{P}r
\\	
& O(\epsilon^k) \nonumber\\
=\, \nonumber& \epsilon\mu \tilde{P}(I-\tilde{P}+\tilde{\Pi})^{-2} \tilde{P}^{(1)}(I-\tilde{P}+\tilde{\Pi})^{-1} r + \epsilon\frac{(n-1)(n-2)}{2}\tilde{\pi}\tilde{P}^{(1)}(I-\tilde{P}+\tilde{\Pi})^{-1}r \\
& - {\epsilon} (n-1) \tilde{\pi}\tilde{P}^{(1)}(I-\tilde{P}+\tilde{\Pi})^{-2} \tilde{P}r + {\epsilon}\tilde{\pi}\tilde{P}^{(1)}(I-\tilde{P}+\tilde{\Pi})^{-3}\tilde{P}r + O(\epsilon^k) \label{eq:288C}
\end{align}
as $\epsilon\downarrow 0$, for each $k\ge 1$. The theorem is proved by using relations (\ref{eq:288A}), (\ref{eq:288B}), and (\ref{eq:288C}), noting that our assumption that $n = o(\epsilon^{-1/4})$ implies that $O(\epsilon^2 n^4) = o(\epsilon)$ as $\epsilon\downarrow 0.$ 
\end{proof}

So, for moderate values of $n$ (of smaller order than $\epsilon^{-1/4}$), we can approximate the expected cumulative reward via the stationary dynamics of a Markov chain having transition matrix $P_1$. In particular, to obtain an approximation for a given sequence $(P_i: i \ge 1)$, we replace $\tilde{P}$ by $P_1$ and approximate $\epsilon\tilde{P}^{(1)}$ via a finite difference as in (\ref{eq:7anew}). 

Finally, we observe that the hypotheses of Theorem 7 hold when $P_i(\epsilon) = P((i-1)\epsilon)$ for $i\ge 1$, with $P(\cdot)$ twice continuously differentiable in a neighborhood of 0, assuming that $P(0)$ is irreducible and aperiodic.

\section{Extension to Markov Jump Processes}
The theory of Sections 2 through 5 extends in a straightforward fashion to finite-state continuous-time Markov jump processes. We illustrate this by generalizing Theorems 5 and 6 to this setting. 

Let $(Q(t):t \ge 0)$ be the family of rate matrices associated with the Markov process $X=(X(t):t\ge 0)$ having non-stationary transition probabilities, so that
\[
\E \left[ f(X(t+h)) | X(u):0\le u\le t \right] = f(X(t)) + (Q(t)f) (X(t)) h + o(h)
\]
a.s. as $h\downarrow 0$, for any $f: S\rightarrow\mathbb{R}$. Also, for $0\le s\le t$, let $P(s,t)$ be the square matrix having entries $P(s,t;x,y) = P(X(t)=y | X(s)=x)$ for $x,y\in S$, where $S$ is (as in the earlier sections of this paper) the state space of $X$. Then, $P(0,u+t) = P(0,u)P(u,u+t)$ for $u,t\ge 0$. Also, if 
\[
\lambda\triangleq \frac{1}{2}\sup_{u\le s\le u+t} \|Q(s)\|,
\]
then 
\begin{align*}
P(u,u+t) &= \sum_{n=0}^{\infty} e^{-\lambda t} \frac{(\lambda t)^n}{n!} \int_{0}^{t}\int_{u_1}^{t}\cdots \int_{u_{n-1}}^{t}R(t-u_n)\cdots R(t-u_1) du_n\cdots du_1 \frac{n!}{t^n}\\
&= \sum_{n=0}^{\infty} e^{-\lambda t}\frac{(\lambda t)^n}{n!}\E R(t(1-U_{(n)})\cdots R(t(1-U_{(1)})),
\end{align*}
where $(U_{(1)},\ldots,U_{(n)})$ are the order statistics from an independent and identically distributed sample of size $n$ from a uniform distribution on $[0,1]$, and $R(s)\triangleq \lambda^{-1}(\lambda I + Q(s))$ for $u\le s\le u+t$. This representation follows directly from the fact that $X$ can be ``uniformized" with respect to a Poisson process having rate $\lambda >0$, and the transition probabilities for $X$, conditional on a jump at time $s$, are given by the entries of the transition matrix $R(s)$; see \cite{massey1998uniform} for a further discussion. 

We now consider a parameterized setting in which we have a family $(Q(\epsilon;t):t\ge0,\epsilon>0)$ of rate matrices, with associated transition matrices $(P(\epsilon;s,t):0\le s\le t,\epsilon>0)$. 

\begin{theorem}\label{thm:9}
	For $\epsilon>0$, let $s=s(\epsilon)=(\log(1/\epsilon))^{1+\delta}$ for some $\delta>0$. Suppose that $t=t(\epsilon)$ is such that $t/s\rightarrow\infty$ as $\epsilon\downarrow 0$, and assume there exist matrices $\tilde{Q}$, $\tilde{Q}^{(1)}$ such that 
	\begin{equation}
	\sup_{0\le u \le s} \| Q(\epsilon;t-u) - (\tilde{Q} - \epsilon\tilde{Q}^{(1)}u) \|  = O(\epsilon^2 s^2) \label{eq:6.1}
	\end{equation}
	as $\epsilon\downarrow 0$, where $\tilde{Q}$ is an irreducible rate matrix. Then, 
	\[
	\mu P(\epsilon;0,t) r= \tilde{\pi} r - \epsilon\tilde{\pi}\tilde{Q}^{(1)}(\tilde{\Pi} - \tilde{Q})^{-2}r + O(\epsilon^2 s^3) 
	\]
	as $\epsilon\downarrow0$, where $\tilde{\pi}$ is the stationary distribution associated with $\tilde{Q}$, and $\tilde{\Pi}$ is the rank one matrix having all rows identical to $\tilde{\pi}$. 
\end{theorem}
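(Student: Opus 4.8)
The plan is to reduce the continuous-time statement to the discrete-time argument behind Theorem \ref{thm:5} via the uniformization representation displayed just above the theorem. As in Theorem \ref{thm:5}, only the dynamics in a terminal window should matter, so I first factor $P(\epsilon;0,t) = P(\epsilon;0,t-s)P(\epsilon;t-s,t)$ and concentrate on the window factor $P(\epsilon;t-s,t)$; the leading factor $P(\epsilon;0,t-s)$ is stochastic and will be absorbed at the very end using $A\tilde{\Pi}=\tilde{\Pi}$ for stochastic $A$, exactly as the early transitions wash out in the proof of Theorem \ref{thm:5}.

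Uniformizing the window at rate $\lambda=\frac12\sup_{t-s\le\sigma\le t}\|Q(\epsilon;\sigma)\|$ (which tends to $\frac12\|\tilde{Q}\|$), I write $P(\epsilon;t-s,t)$ as the Poisson mixture of products of the stochastic matrices $R(\epsilon;\sigma)=I+\lambda^{-1}Q(\epsilon;\sigma)$ evaluated at the jump epochs. Dividing the hypothesis (\ref{eq:6.1}) by $\lambda$ gives $\sup_{0\le u\le s}\|R(\epsilon;t-u)-(\tilde{R}-\epsilon\lambda^{-1}\tilde{Q}^{(1)}u)\|=O(\epsilon^2 s^2)$ with $\tilde{R}=I+\lambda^{-1}\tilde{Q}$; here $\tilde{R}$ is stochastic, irreducible, and aperiodic (positive diagonal from the $I$ term), with stationary row vector $\tilde{\pi}$. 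Conditioning on the Poisson epochs, the window product is an inhomogeneous product of the $R(\epsilon;\cdot)$ in increasing time order, to which the estimate of Proposition \ref{prop:2} applies: writing $U_i$ for the backward time (measured from $t$) of the $i$-th most recent epoch and $N$ for the number of epochs in the window, the product equals $\tilde{R}^{N}-\epsilon\lambda^{-1}\sum_i U_i\,\tilde{R}^{N-i}\tilde{Q}^{(1)}\tilde{R}^{i-1}$ up to $O(\epsilon^2 s^3)$. On the event $\{N\le C s\}$ this bound holds uniformly, and on its complement I bound the contribution trivially by $\pr(N>Cs)$, which is super-polynomially small in $1/\epsilon$ since $\lambda s\gg\log(1/\epsilon)$ and stochastic products have norm $1$.

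Next I take expectations over the Poisson process. For the leading term, geometric ergodicity gives $\|\tilde{R}^m-\tilde{\Pi}\|\le C\rho^m$, so $\|\E\tilde{R}^{N}-\tilde{\Pi}\|\le C\,\E\rho^{N}=Ce^{-\lambda s(1-\rho)}=O(\epsilon^k)$ for every $k$. For the correction I replace the left factor $\tilde{R}^{N-i}$ by $\tilde{\Pi}$ (geometric error, absorbed into $O(\epsilon^k)$), use that the rows of $\tilde{Q}^{(1)}$ sum to zero---hence $\tilde{Q}^{(1)}\tilde{\Pi}=0$---to convert $\tilde{\Pi}\tilde{Q}^{(1)}\tilde{R}^{i-1}$ into $\tilde{\Pi}\tilde{Q}^{(1)}(\tilde{R}-\tilde{\Pi})^{i-1}$, and then integrate out $U_i$ via $\E U_i=i/\lambda$ (the $i$-th backward epoch is $\mathrm{Gamma}(i,\lambda)$). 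Summing with part ii) of Proposition \ref{prop:1} ($\sum_{i\ge1}i\,A^{i-1}=(I-A)^{-2}$ with $A=\tilde{R}-\tilde{\Pi}$) yields a correction $-\epsilon\lambda^{-2}\tilde{\Pi}\tilde{Q}^{(1)}(I-\tilde{R}+\tilde{\Pi})^{-2}r$.

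It remains to recognize this as the stated expression and to discharge the $\lambda$-dependence. Since $I-\tilde{R}+\tilde{\Pi}=\lambda^{-1}(\lambda\tilde{\Pi}-\tilde{Q})$, the prefactor $\lambda^{-2}$ cancels against $(I-\tilde{R}+\tilde{\Pi})^{-2}=\lambda^2(\lambda\tilde{\Pi}-\tilde{Q})^{-2}$, leaving $-\epsilon\tilde{\Pi}\tilde{Q}^{(1)}(\lambda\tilde{\Pi}-\tilde{Q})^{-2}r$; and the identity $(c\tilde{\Pi}-\tilde{Q})^{-1}=c^{-1}\tilde{\Pi}-\tilde{Q}^{\#}$ (with $\tilde{Q}^{\#}$ the group inverse, using $\tilde{Q}\tilde{\Pi}=\tilde{\Pi}\tilde{Q}=0$), together with $\tilde{Q}^{(1)}\tilde{\Pi}=0$, shows $\tilde{Q}^{(1)}(c\tilde{\Pi}-\tilde{Q})^{-2}$ is independent of $c$, so I may replace $\lambda\tilde{\Pi}-\tilde{Q}$ by $\tilde{\Pi}-\tilde{Q}$. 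Finally, left-multiplying by $\mu P(\epsilon;0,t-s)$, a probability row vector $\eta$ with $\eta\tilde{\Pi}=\tilde{\pi}$, turns the leading $\tilde{\Pi}r$ into $\tilde{\pi}r$ and, because the correction already begins with $\tilde{\Pi}$, turns it into $-\epsilon\tilde{\pi}\tilde{Q}^{(1)}(\tilde{\Pi}-\tilde{Q})^{-2}r$. I expect the main obstacle to be the averaging over the Poisson epochs---rigorously handling the random number of steps in the Proposition \ref{prop:2} error bound and justifying the term-by-term use of $\E U_i=i/\lambda$---rather than the algebra, with the $\lambda$-cancellation serving as a reassuring check that the (arbitrary) uniformization rate leaves the answer unchanged.
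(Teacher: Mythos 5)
Your proposal follows the paper's own route almost step for step: factor at time $t-s$, uniformize the terminal window, apply the Proposition~\ref{prop:2} estimate conditionally on the Poisson epochs, discard the Poisson tail, replace high powers of $\tilde{R}$ by $\tilde{\Pi}$ via geometric ergodicity, sum with Proposition~\ref{prop:1}~ii), and finish with algebra eliminating the uniformization rate. The two places you deviate are improvements in exposition rather than substance: your group-inverse identity $(c\tilde{\Pi}-\tilde{Q})^{-1}=c^{-1}\tilde{\Pi}-\tilde{Q}^{\#}$ is a cleaner substitute for the paper's computation with $W=\tilde{Q}^{(1)}(\tilde{\lambda}\tilde{\Pi}-\tilde{Q})^{-2}$, and your bound $\E\rho^{N}=e^{-\lambda s(1-\rho)}$ handles the leading term more directly than the paper's restriction to $|n/s-\tilde{\lambda}|<\tilde{\lambda}/4$ combined with order-statistics expectations $\E U_{(i)}=i/(n+1)$.

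There is, however, one step whose justification fails as written. Replacing the left factor $\tilde{R}^{N-i}$ by $\tilde{\Pi}$ in the correction sum is \emph{not} by itself an $O(\epsilon^{k})$ operation: for $i$ near $N$ you have $\rho^{N-i}=O(1)$ while $U_i$ can be of order $s$, so with the only bound available at that point ($\|\tilde{Q}^{(1)}\tilde{R}^{i-1}\|\le\|\tilde{Q}^{(1)}\|$) the total replacement error is $O(\epsilon s)$ --- larger than the $O(\epsilon)$ correction you are trying to isolate. The repair is simply to perform your two steps in the opposite order: first apply the \emph{exact} identity $\tilde{Q}^{(1)}\tilde{R}^{i-1}=\tilde{Q}^{(1)}(\tilde{R}-\tilde{\Pi})^{i-1}$ (valid for all $i\ge1$ since $\tilde{Q}^{(1)}\tilde{\Pi}=0$), and only then replace $\tilde{R}^{N-i}$ by $\tilde{\Pi}$; the error per term is then at most $U_i\,C^{2}\rho^{N-i}\rho^{\,i-1}\|\tilde{Q}^{(1)}\|\le C^{2}s\rho^{N-1}\|\tilde{Q}^{(1)}\|$, and $\E\bigl[N s\rho^{N-1}\bigr]=\lambda s^{2}e^{-\lambda s(1-\rho)}=O(\epsilon^{k})$ for every $k$. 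The same decay in $i$ is what legitimizes the term-by-term use of $\E U_i=i/\lambda$ that you flagged (the indices $i\gtrsim\lambda s/2$, where Gamma truncation at $s$ matters, contribute $O(\epsilon^{k})$); this is exactly the role of the split at $j=m/2$ in the paper's Theorem~\ref{thm:5} argument, which the proof of Theorem~\ref{thm:9} invokes. A final hygiene point: take the uniformization rate with a margin, e.g.\ $\tilde{\lambda}=\frac12\|\tilde{Q}\|+1$ as the paper does, rather than $\lambda=\frac12\sup_{t-s\le\sigma\le t}\|Q(\epsilon;\sigma)\|$; with your choice $\lambda$ can fall slightly below $\frac12\|\tilde{Q}\|$, so $\tilde{R}=I+\lambda^{-1}\tilde{Q}$ need not be stochastic, and even at equality $\tilde{R}$ can be periodic (e.g.\ $\tilde{Q}=\bigl(\begin{smallmatrix}-1&1\\1&-1\end{smallmatrix}\bigr)$ gives $\tilde{R}=\bigl(\begin{smallmatrix}0&1\\1&0\end{smallmatrix}\bigr)$), which destroys the geometric ergodicity your argument relies on.
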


\begin{proof}{Proof of Theorem \ref{thm:9}.}
Note that for some $\epsilon_0>0$, (\ref{eq:6.1}) ensures that 
\[
\sup_{0\le u\le s} \|Q(\epsilon;t-u)\| \le \frac{1}{2}\|\tilde{Q}\| + 1 \triangleq \tilde{\lambda},
\]
uniformly in $\epsilon\in (0,\epsilon_0)$. Then, set $R(\epsilon;u) = \tilde{\lambda}^{-1}(\tilde{\lambda}I + Q(\epsilon;u))$ for $t-s\le u\le t$. So, 
\begin{align}
\mu P(\epsilon;0,t) r &= \mu P(\epsilon;0,t-s)P(\epsilon;t-s,t)r \nonumber\\
& = \sum_{n=0}^{\infty} e^{-\tilde{\lambda}s} \frac{(\tilde{\lambda}s)^n}{n!} \mu P(\epsilon;0,t-s)\E R(\epsilon;t-sU_{(n)})\cdots R(\epsilon;t-sU_{(1)})r    \nonumber\\
&= \sum_{|\frac{n}{s}-\tilde{\lambda}|<\frac{\tilde{\lambda}}{4}} e^{-\tilde{\lambda}s}\frac{(\tilde{\lambda}s)^n}{n!}\mu P(\epsilon;0,t-s) \E R(\epsilon;t-s U_{(n)})\cdots R(\epsilon;t-sU_{(1)})r + O(\epsilon^k) \label{eq:6.2}
\end{align}
as $\epsilon\downarrow0$, for each $k\ge 1$ (since the probability that a Poisson random variable with mean $\tilde{\lambda}s$ lies more than $\tilde{\lambda}s/4$ from its mean is of the order of $\exp(-c(\log(1/\epsilon))^{1+\delta})$ for some $c>0$, using a standard large deviations tail bound; see \cite{dembo2010large}).

Assumption (\ref{eq:6.1}) guarantees that 
\begin{equation}
\sup_{0\le u \le s} \| R(\epsilon;t-u) - (\tilde{R}-\epsilon\tilde{R}^{(1)}u) \| = O(\epsilon^2 s^2) \label{eq:6.3}
\end{equation}
as $\epsilon\downarrow0$, where $\tilde{R} = \tilde{\lambda}^{-1}(\tilde{\lambda}I + \tilde{Q})$ and $\tilde{R}^{(1)} = \tilde{Q}^{(1)}/\tilde{\lambda}$. Furthermore, the stochastic matrix $\tilde{R}$ is irreducible and aperiodic (since it has positive diagonal entries because $\tilde{\lambda}> \|\tilde{Q}\|/2$). The bound (\ref{eq:6.4}), based on (\ref{eq:6.3}), establishes that 
\begin{align*}
& \left\| R(\epsilon;t-s u_{(n)})\cdots R(\epsilon;t-su_{(1)}) -\left(\tilde{R}^n - \epsilon\sum_{i=0}^{n-1}s u_{(i+1)}\tilde{R}^{n-i-1}\tilde{R}^{(1)}\tilde{R}^{i}\right)  \right\| \\
& \le \epsilon^2d''s^3(1+\epsilon c's)^n \le \epsilon^2 d'' s^3(1+\epsilon c's)^{\frac{5}{4}\tilde{\lambda}s}
\end{align*}
uniformly in $0\le u_{(1)}\le \cdots \le u_{(n)}\le 1$ for some constants $c'$, $d''$. It follows that 
\begin{align*}
\left\| R(\epsilon;t-s u_{(n)})\cdots R(\epsilon;t-su_{(1)}) -\left(\tilde{R}^n - \epsilon\sum_{i=0}^{n-1}s u_{(i+1)}\tilde{R}^{n-i-1}\tilde{R}^{(1)}\tilde{R}^{i}\right)  \right\| = O(\epsilon^2 s^3)
\end{align*}
as $\epsilon\downarrow0$, uniformly in $0\le u_{(1)}\le \cdots \le u_{(n)}\le 1$. Hence, 
\begin{align*}
\left\| \E R(\epsilon;t-s U_{(n)})\cdots R(\epsilon;t-sU_{(1)}) -\left(\tilde{R}^n - \epsilon\sum_{i=0}^{n-1}s \left(\frac{i+1}{n+1}\right)\tilde{R}^{n-i-1}\tilde{R}^{(1)}\tilde{R}^{i}\right)  \right\| = O(\epsilon^2 s^3)
\end{align*}
as $\epsilon\downarrow0$, uniformly in $n\in[\tilde{\lambda}s(3/4),\tilde{\lambda}s(5/4)]$, where we have used the fact that $\E U_{(i)} = i/(n+1)$ for $1\le i\le n$; see \cite{arnold2008first}. We now apply the same argument as in the proof of Theorem 5 (using the fact that $\|\tilde{R}^n - \tilde{\Pi}\|\rightarrow 0$ geometrically fast) to obtain
\begin{equation}
\left\| \E R(\epsilon;t-s U_{(n)})\cdots R(\epsilon;t-sU_{(1)})r -\left(\tilde{\pi} - \frac{\epsilon}{n+1}\tilde{\pi} \tilde{R}^{(1)}(I-\tilde{R}+\tilde{\Pi})^{-2} \right) r \right\| = O(\epsilon^2 s^3) \label{eq:6.5}
\end{equation}
as $\epsilon\downarrow 0$, uniformly in $n\in [\tilde{\lambda}s(3/4),\tilde{\lambda}s(5/4)]$. Plugging (\ref{eq:6.5}) into (\ref{eq:6.2}), we conclude that 
\begin{align*}
\mu P(\epsilon;0,t) &=  \sum_{|\frac{n}{s}-\tilde{\lambda}|<\frac{\tilde{\lambda}}{4}} e^{-\tilde{\lambda}s}\frac{(\tilde{\lambda}s)^n}{n!}\left[\tilde{\pi} r -  \frac{\epsilon}{n+1}\tilde{\pi} \tilde{R}^{(1)}(I-\tilde{R}+\tilde{\Pi})^{-2}  r\right] +  O(\epsilon^2 s^3)\\
&= \tilde{\pi} r - \frac{\epsilon}{\tilde{\lambda}}\tilde{\pi}\tilde{R}^{(1)}(I-\tilde{R}+\tilde{\Pi})^{-2} r +  O(\epsilon^2 s^3)\\
&= \tilde{\pi} r - \frac{\epsilon}{\tilde{\lambda}^2}\tilde{\pi}\tilde{Q}^{(1)}(I-\tilde{R}+\tilde{\Pi})^{-2} r +  O(\epsilon^2 s^3)\\
& = \tilde{\pi} r - {\epsilon}\tilde{\pi}\tilde{Q}^{(1)}(\tilde{\lambda}\tilde{\Pi}-\tilde{Q})^{-2} r +  O(\epsilon^2 s^3)
\end{align*}
as $\epsilon\downarrow0$. 

We now consider $W = \tilde{Q}^{(1)}(\tilde{\lambda}\tilde{\Pi}-\tilde{Q})^{-2}$, so that $W(\tilde{\lambda}\tilde{\Pi}-\tilde{Q})^2 = \tilde{Q}^{(1)}$. Since $\tilde{\Pi} = \tilde{Q}\tilde{\Pi} = 0$ and $\tilde{\Pi}^2 = \tilde{\Pi}$, we find that 
\begin{equation}
W(\tilde{\lambda}^2\tilde{\Pi}+\tilde{Q}^2) = \tilde{Q}^{(1)} \label{eq:6.8}.
\end{equation}
Since $\tilde{Q}^{(1)} = \tilde{\lambda}\tilde{R}^{(1)}$, it follows that $\tilde{Q}^{(1)}e =0$. Of course, $\tilde{Q}e = 0$ as well, so (\ref{eq:6.8}) implies that 
\[
\tilde{\lambda}^2W\tilde{\Pi} e = 0,
\]
which yields $\tilde{\lambda}^2We = 0$. But $\tilde{\lambda}\neq 0$, so we conclude that $We = 0$. Since $\tilde{\Pi} = e \tilde{\pi}$, (\ref{eq:6.8}) implies that 
\[
W\tilde{Q}^2 = \tilde{Q}^{(1)},
\]
and hence 
\[
W(\tilde{\Pi} - \tilde{Q})^2 = \tilde{Q}^{(1)}.
\]
It is well known that $\tilde{\Pi}-\tilde{Q}$ is non-singular when $\tilde{Q}$ is irreducible, so consequently $W = \tilde{Q}^{(1)}(\tilde{\Pi}-\tilde{Q})^{-2}$. Hence, $\tilde{\pi}\tilde{Q}^{(1)}(\tilde{\lambda}\tilde{\Pi}-\tilde{Q})^{-2}r = \tilde{\pi}\tilde{Q}^{(1)}(\tilde{\Pi}-\tilde{Q})^{-2}r,$ proving the theorem. 
\end{proof}

As in our discrete time theory, the slow variation assumption (\ref{eq:6.1}) is required only in a ``logarithmic neighborhood" of time $t$. No time re-scaling is required in order that this result be valid, nor do we need to assume that the supremum of $\|Q(\epsilon;\cdot)\|$ is bounded over $[0,t]$ uniformly in $\epsilon$. We further note that (\ref{eq:6.1}) is valid (for example) when $Q(\epsilon;u) = \utilde{Q}(\epsilon u) $ for $u\ge 0$, and $t(\epsilon) = t/\epsilon$ with $\utilde{Q}(\cdot)$ twice continuously differentiable in a neighborhood of $t$, where $\tilde{Q} = \utilde{Q}(t)$ and $\tilde{Q}^{(1)} = \utilde{Q}^{(1)}(t)$. This is precisely the ``uniform acceleration" (UA) asymptotic environment considered by \cite{massey1998uniform}. 

There is no intrinsic difficulty in extending to higher order approximations. As an illustration, we state the following result (without proof) for the second order approximation.

\begin{theorem}
	For $\epsilon>0$, let $s=s(\epsilon)=(\log(1/\epsilon))^{1+\delta}$ for some $\delta>0$. Suppose that $t=t(\epsilon)$ is such that $t/s\rightarrow\infty$ as $\epsilon\downarrow 0$, and assume there exist matrices $\tilde{Q}$, $\tilde{Q}^{(1)}$, and $\tilde{Q}^{(2)}$ such that 
	\begin{equation}
	\sup_{0\le u \le s} \left\| Q(\epsilon;t-u) - \left(\tilde{Q} - \epsilon\tilde{Q}^{(1)}u + \frac{\epsilon^2}{2}\tilde{Q}^{(2)}u^2 \right) \right\|  = O(\epsilon^3 s^3) \label{eq:6.19}
	\end{equation}
	as $\epsilon\downarrow 0$, where $\tilde{Q}$ is an irreducible rate matrix. Then, 
	\begin{align*}
	\mu P(\epsilon;0,t) r =\,& \tilde{\pi} r - \epsilon\tilde{\pi}\tilde{Q}^{(1)}(\tilde{\Pi} - \tilde{Q})^{-2}r + \frac{1}{2}\epsilon^2 \tilde{\pi}\tilde{Q}^{(2)}(\tilde{\Pi}-\tilde{Q})^{-3} r \\
	& + \epsilon^2 \tilde{\pi}\tilde{Q}^{(1)} \left( (\tilde{\Pi} - \tilde{Q})^{-2}\tilde{Q}^{(1)}(\tilde{\Pi} - \tilde{Q})^{-2} + 2 (\tilde{\Pi} - \tilde{Q})^{-1}\tilde{Q}^{(1)}(\tilde{\Pi} - \tilde{Q})^{-3} \right)r + O(\epsilon^3 s^4) 
	\end{align*}
	as $\epsilon\downarrow0$, where $\tilde{\pi}$ is the stationary distribution associated with $\tilde{Q}$, and $\tilde{\Pi}$ is the rank one matrix having all rows identical to $\tilde{\pi}$. 
\end{theorem}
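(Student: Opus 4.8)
The plan is to follow the same uniformization-plus-Poisson-equation strategy used in the proof of Theorem \ref{thm:9}, now carrying the expansion out to second order in $\epsilon$. First I would fix the uniformization rate $\tilde\lambda = \frac{1}{2}\|\tilde Q\| + 1$, set $R(\epsilon;u) = \tilde\lambda^{-1}(\tilde\lambda I + Q(\epsilon;u))$, and write
\[
\mu P(\epsilon;0,t) r = \sum_{n=0}^{\infty} e^{-\tilde\lambda s}\frac{(\tilde\lambda s)^n}{n!}\,\mu P(\epsilon;0,t-s)\,\E R(\epsilon;t-sU_{(n)})\cdots R(\epsilon;t-sU_{(1)})\,r.
\]
Exactly as in Theorem \ref{thm:9}, a large-deviations tail bound restricts the sum to $n \in [\tilde\lambda s(3/4), \tilde\lambda s(5/4)]$ up to an $O(\epsilon^k)$ error for every $k$, because $s = (\log(1/\epsilon))^{1+\delta}$.

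\textbf{Second-order product expansion.} The new ingredient is a \emph{second-order} analogue of Proposition \ref{prop:2} for the random product $R(\epsilon;t-sU_{(n)})\cdots R(\epsilon;t-sU_{(1)})$. Assumption (\ref{eq:6.19}) translates (dividing by $\tilde\lambda$) into $\sup_{0\le u\le s}\|R(\epsilon;t-u) - (\tilde R - \epsilon \tilde R^{(1)} u + \tfrac{\epsilon^2}{2}\tilde R^{(2)} u^2)\| = O(\epsilon^3 s^3)$ with $\tilde R^{(j)} = \tilde Q^{(j)}/\tilde\lambda$. I would run the telescoping recursion of Proposition \ref{prop:2} one order further, so that for fixed ordered arguments $0\le u_{(1)}\le\cdots\le u_{(n)}\le 1$ the product equals
\[
\tilde R^n - \epsilon\sum_{i=0}^{n-1} s u_{(i+1)}\tilde R^{n-i-1}\tilde R^{(1)}\tilde R^{i} + \frac{\epsilon^2}{2}\sum_{i} s^2 u_{(i+1)}^2 \tilde R^{n-i-1}\tilde R^{(2)}\tilde R^{i} + \epsilon^2\!\!\sum_{i<j} s^2 u_{(i+1)}u_{(j+1)}\tilde R^{n-j-1}\tilde R^{(1)}\tilde R^{j-i-1}\tilde R^{(1)}\tilde R^{i} + O(\epsilon^3 s^4).
\]
Taking expectations requires the joint moments $\E U_{(i)}U_{(j)}$ of uniform order statistics (in addition to $\E U_{(i)} = i/(n+1)$ from \cite{arnold2008first}); the mixed sum is where the cross term $\tilde Q^{(1)}(\tilde\Pi-\tilde Q)^{-2}\tilde Q^{(1)}(\tilde\Pi-\tilde Q)^{-2}$ and the iterated term $2(\tilde\Pi-\tilde Q)^{-1}\tilde Q^{(1)}(\tilde\Pi-\tilde Q)^{-3}$ will be born.

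\textbf{Collapsing the sums.} Next I would insert $\tilde R^k = (\tilde R-\tilde\Pi)^k + \tilde\Pi$ (for $k\ge 1$) and exploit $\|(\tilde R-\tilde\Pi)^k\|\to 0$ geometrically, together with $\tilde R^{(1)}\tilde\Pi = 0$ and $A\tilde\Pi = \tilde\Pi$ for stochastic $A$, precisely as in Theorems \ref{thm:5} and \ref{thm:9}. Part ii) of Proposition \ref{prop:1} then converts each residual geometric sum $\sum_{k} k (\tilde R-\tilde\Pi)^k$ and $\sum_k k^2 (\tilde R-\tilde\Pi)^k$ into powers of $(I-\tilde R+\tilde\Pi)^{-1}$; the resulting $n$-dependence of the leading Poisson weights must cancel after summing against the Poisson$(\tilde\lambda s)$ distribution, leaving factors of $\tilde\lambda^{-1}$ and $\tilde\lambda^{-2}$. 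Finally I would perform the algebraic conversion from $\tilde R$-quantities back to $\tilde Q$-quantities: using $I - \tilde R + \tilde\Pi = \tilde\lambda^{-1}(\tilde\lambda\tilde\Pi - \tilde Q)$ and the identity $\tilde Q^{(j)}e = 0$, $\tilde Q e = 0$, the same argument as at the end of Theorem \ref{thm:9} replaces each $(\tilde\lambda\tilde\Pi - \tilde Q)^{-k}$ by $(\tilde\Pi - \tilde Q)^{-k}$, and the powers of $\tilde\lambda$ cancel against those produced by the Poisson summation.

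\textbf{Main obstacle.} The hard part will be the bookkeeping of the second-order (mixed) term: correctly assembling the double sum $\sum_{i<j}$ with the order-statistic covariance $\E U_{(i+1)}U_{(j+1)}$, verifying that after the geometric-decay truncation both the iterated $(\tilde\Pi-\tilde Q)^{-1}\tilde Q^{(1)}(\tilde\Pi-\tilde Q)^{-3}$ and the ``sandwiched'' $(\tilde\Pi-\tilde Q)^{-2}\tilde Q^{(1)}(\tilde\Pi-\tilde Q)^{-2}$ structures emerge with the correct combinatorial coefficients ($2$ and $1$ respectively), and confirming that all polynomially-growing-in-$n$ prefactors cancel so that only the stated $\epsilon^2$ terms survive with an $O(\epsilon^3 s^4)$ remainder. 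Keeping the $\tilde Q^{(2)}$ contribution (which is comparatively clean, yielding $\tfrac12\tilde Q^{(2)}(\tilde\Pi-\tilde Q)^{-3}$ via $\E U_{(i)}^2$) separated from the two $\tilde Q^{(1)}\tilde Q^{(1)}$ contributions throughout the computation is the bookkeeping crux; everything else is a direct repetition of the first-order argument.
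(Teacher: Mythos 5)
The paper itself gives no proof of this theorem (it is stated explicitly ``without proof'' as a direct extension of Theorem \ref{thm:9}), and your plan is precisely that intended extension: uniformization at rate $\tilde{\lambda}$, the Poisson large-deviations truncation, a second-order analogue of Proposition \ref{prop:2} for the random product (your $\sum_{i<j}$ cross-term structure is the correct one), the collapse via $\tilde{R}^{(1)}\tilde{\Pi}=0$, geometric decay of $(\tilde{R}-\tilde{\Pi})^{j}$ and Proposition \ref{prop:1}(ii), and the $W$-argument converting each $(\tilde{\lambda}\tilde{\Pi}-\tilde{Q})^{-k}$ into $(\tilde{\Pi}-\tilde{Q})^{-k}$, which does go through for $k=1,2,3$ because $\tilde{Q}^{(1)}e=\tilde{Q}^{(2)}e=0$ and all cross terms in the expansions of $(\tilde{\Pi}-\tilde{Q})^{k}$ vanish. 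One caveat worth recording: the remainder $O(\epsilon^{3}s^{4})$ you attach to the raw product expansion, uniformly in the $u_{(i)}$'s, does not follow from naive term counting --- the triple-$\tilde{R}^{(1)}$ terms alone contribute $\binom{n}{3}\,O(\epsilon^{3}s^{3})=O(\epsilon^{3}s^{6})$ since $n\asymp\tilde{\lambda}s$ --- so to obtain the stated power of $s$ you must bound the higher-order terms only \emph{after} the collapsing step (each gap between successive $\tilde{R}^{(1)}$ or $\tilde{R}^{(2)}$ factors damped geometrically, leaving a single undamped index of size $O(s)$); alternatively, one can accept the crude bound $O(\epsilon^{3}s^{6})$, which is still $o(\epsilon^{2})$ because $s$ is polylogarithmic in $1/\epsilon$, preserving the substance of the theorem though not the literal error exponent.
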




\section{Numerical Example}
In this section, we illustrate our methodology by applying it to the problem of computing the infinite horizon discounted reward for a Markov chain $X = (X_n: n\ge 0)$ having non-stationary transition probabilities. Specifically, we consider an $(s,S)$ inventory model in which $r(x)=x$ and 

\begin{align}
\label{eq:9}
X_{k+1} =
\begin{cases}
X_k - D_k &~\text{if}~ X_k - D_k \ge s\\
S &~\text{if}~ X_k - D_k< S,
\end{cases}
\end{align}
for $k\ge0$. The non-stationarity arises as a consequence of time-varying demand. In particular, we assume that the $D_k$'s are independent Poisson random variables in which
\[
\E D_j = m + \epsilon j
\]
for some $j\ge 1$, where $m>0$ and $\epsilon\ge 0$. In our experiments, we chose $m=1$ and performed calculations at $(s,S,\alpha)\in \{ (4,10,0.1), (4,10,0.5),(4,10,1),(40,100,0.1),(40,100,0.5),(40,100,1) \}$. We then studied the quality of our approximations as a function of $\epsilon$. 

Recall that our approximations require the derivative matrices $\tilde{P}^{(1)}$ and $\tilde{P}^{(2)}$. In a typical application, the parameter $\epsilon$ is fixed, and these matrices need to be approximated via the finite differences given by (\ref{eq:7anew}) and (\ref{eq:7aa}). We choose $j = \lceil (1-e^{-\alpha})^{-1} \rceil$ in these formulae, and re-compute our finite difference approximations to $\tilde{P}^{(1)}$ and $\tilde{P}^{(2)}$ at each value of $\epsilon$. Since our finite difference approximations are non-linear in $\epsilon$, this implies that even our first-order approximation to $\kappa(\epsilon)$ is non-linear in $\epsilon$. 

In the current experiment, however, we also have the ability to compute the exact derivatives $\tilde{P}^{(1)}$ and $\tilde{P}^{(2)}$ via component-wise differentiation of the matrices $P_1(\epsilon)$ at $\epsilon = 0$. So, we compute two first-order approximations for $\kappa(\epsilon)$, one based on our finite-difference approximation for $\tilde{P}^{(1)}$ (denoted ``$1^{st}$ order FD") and the other based on the exact derivative (denoted ``$1^{st}$ order Exact"). Similarly, we compute two second-order approximations for $\kappa(\epsilon)$ (denoted ``$2^{nd}$ order FD" and ``$2^{nd}$ order Exact", respectively). 

Computing the exact value of $\kappa(\epsilon)$ is implemented by using (\ref{eq:2.2}), based on truncating the sum over $j$ at a suitable value $n-1$. We note that the norm of the tail sum of (\ref{eq:2.2}) is upper bounded via 
\[
\sum_{j=n}^{\infty} e^{-\alpha j} \|r\| = e^{-\alpha n} (1-e^{-\alpha})^{-1}S.
\]
We want the contribution of the tail sum to be small relative to the magnitude of the first and second order corrections to the stationary model in which $\epsilon = 0$, so we choose $n$ so that $e^{-\alpha n} (1-e^{-\alpha})^{-1}S \le \epsilon^6$ $\Big($ie. $n = \left\lceil-\frac{1}{\alpha}\log\left(\frac{\epsilon^6 (1-e^{-\alpha})}{S}\right)
\right\rceil$ $\Big)$. We denote the value of $\kappa(\epsilon)$ obtained through this truncation as ``Truncated True."

To test the quality of the truncation, we computed the exact derivatives $\kappa^{(1)}(0)$ and $\kappa^{(2)}(0)$ as determined by the coefficients in $\epsilon$ and $\epsilon^2/2$ appearing in (\ref{eq:secondterms}), and compared them to finite difference approximations to $\kappa^{(1)}(0)$ and $\kappa^{(2)}(0)$ as obtained from the ``Truncated True" approximations. The relative errors were uniformly under $0.1\%$ for $S=10$ and were under $5\%$ for $S=100.$

Tables 1 through 6 below provide the percent relative error (i.e., $100|\text{Approx} - \text{Truncated True}|/$ $\text{Truncated True}\,\%$) of our approximations, as a function of $\epsilon$, at each of our six combinations of $(s,S,\alpha)$. The tables are consistent with what we would expect from our approximations, in the sense that the relative error is smaller when $\epsilon$ is small, and typically also smaller when the second order approximation is used as compared to the first order approximation. In addition, as the discount rate $\alpha$ gets larger, the main contribution to the infinite horizon reward focuses to a greater degree on the early transitions at which the Taylor approximation around $P_1$ will be good. As expected, the tables do indeed show that the relative error typically decreases with larger discount rates. Note that there is no theoretical guarantee that using exact derivatives in our approximations will reduce the error relative to using finite difference approximations, and we find examples in the tables in which each dominates the other. In conclusion, it appears that this numerical investigation validates our approximations.

\begin{table}[h]
	\footnotesize
	\centering
	\caption{Relative Accuracy of the First and Second Order Approximations}
	\label{my-label}
	\begin{tabular}{|l|l|l|l|l|l|}
		\hline
		$\epsilon$ & Truncated True & $1^{st}$ Order FD \ & $1^{st}$ Order Exact \ & $2^{nd}$ Order FD \ & $2^{nd}$ Order Exact \ \\ \hline
		0          &      64.0915  &   0.0000                         &        0.0000                       &    0.0000                        &            0.0000                   \\
		0.001      &   64.0170  &   0.0002                     &     0.0000                          &   0.0002                         &   0.0000                   \\
		0.004      &   63.7936  &  0.0032                     &  0.0005                      &  0.0035             &   0.0000\\
		0.016      &   62.9040  & 0.0523                        &   0.0090                     &    0.0435           &    0.0040                           \\
		0.064      &   59.4335  &   0.8079                    & 0.1925                  &    0.1375            &   0.0334                          \\
		0.256      &   47.8842  &   10.8270                  &   6.0198                    &  34.3068            &   2.8580                            \\
		1.024      &   26.9824  &    72.7559                 &   145.4665               &  160.1515          &   55.6900                            \\ \hline
	\end{tabular}
\vspace{1ex}

 Parameters: $\alpha = 0.1$, $s =4$, $S= 10$
\end{table}
\begin{table}[h]
	\footnotesize
	\centering
	\caption{Relative Accuracy of the First and Second Order Approximations}
	\label{my-label}
	\begin{tabular}{|l|l|l|l|l|l|}
		\hline
		$\epsilon$ & Truncated True & $1^{st}$ Order FD \ & $1^{st}$ Order Exact \ & $2^{nd}$ Order FD \ & $2^{nd}$ Order Exact \ \\ \hline
		0          &      13.0039  &   0.0000                         &   0.0000                            &    0.0000                        &    0.0000                           \\
		0.001      &   13.0014  &   0.0000                        &     0.0000                          &   0.0000                         &   0.0000                            \\
		0.004      &   12.9936 &  0.0000                            & 0.0001                      &  0.0000                          &   0.0000                            \\
		0.016      &   12.9627  & 0.0006                    & 0.0019                         &    0.0013                 &   0.0000                            \\
		0.064      &   12.8415  &   0.0070                    & 0.0291                       &    0.02117               &  0.0035                             \\
		0.256      &   12.3875  &  0.0266 & 0.3892 & 0.2323                          &  0.1520                             \\
		1.024      &   10.9140  & 0.9337 &5.2123 &  4.5863                         &   4.6166                            \\ \hline
	\end{tabular}
\vspace{1ex}

Parameters: $\alpha = 0.5$, $s =4$, $S= 10$
\end{table}

\begin{table}[h]
	\footnotesize
	\centering
	\caption{Relative Accuracy of the First and Second Order Approximations}
	\label{my-label}
	\begin{tabular}{|l|l|l|l|l|l|}
		\hline
		$\epsilon$ & Truncated True & $1^{st}$ Order FD \ & $1^{st}$ Order Exact \ & $2^{nd}$ Order FD \ & $2^{nd}$ Order Exact \ \\ \hline
		0          &      5.8910  &   0.0000                         &       0.0000                        &    0.0000                        &    0.0000                           \\
		0.001      &   5.8906  &   0.0000                        &        0.0000                       &   0.0000                         &     0.0000                          \\
		0.004      &   5.8893 &  0.0000                            &      0.0000                         &  0.0000                          &    0.0000                           \\
		0.016      &   5.8843  & 0.0003                           &  0.0005                             &    0.0001       &   0.0000                            \\
		0.064      &   5.8648  &   0.0057                         &   0.0077                            &    0.0026       & 0.0007                              \\
		0.256      &   5.7904  &  0.0705 & 0.1041 & 0.0574      & 0.0320                              \\
		1.024      &   5.5316  & 0.6203 & 1.2102 &  0.9747       &  1.0713                             \\ \hline
	\end{tabular}
\vspace{1ex}

Parameters: $\alpha = 1.0$, $s =4$, $S= 10$
\end{table}

\begin{table}[h]
	\footnotesize
	\centering
	\caption{Relative Accuracy of the First and Second Order Approximations}
	\label{my-label}
	\begin{tabular}{|l|l|l|l|l|l|}
		\hline
		$\epsilon$ & Truncated True & $1^{st}$ Order FD \ & $1^{st}$ Order Exact \ & $2^{nd}$ Order FD \ & $2^{nd}$ Order Exact \ \\ \hline
		0          &      853.5824  &   0.0000                         &    0.0000                           &    0.0000                        &    0.0000                           \\
		0.001      &   852.8980  &   0.0008                       &   0.0008                            &   0.0000               &  0.0000                             \\
		0.004      &   850.9225 &  0.0122                            &  0.0124                             &  0.0011        &  0.0010                             \\
		0.016      &   843.9259  & 0.1637                           &   0.1665                            &    0.0524                  &   0.0494           \\
		0.064      &   823.6448  &   1.6917                         & 1.7372                  &    1.8612                 &  1.8027                          \\
		0.256      &   778.7003  &  12.3135 &13.1122 & 47.9461                          &  46.7974                             \\
		1.024      &   686.7012  & 62.0493 &78.7922 &  852.3577                         & 1008.1822                              \\ \hline
	\end{tabular}
	\vspace{1ex}
	
	Parameters: $\alpha = 0.1$, $s =40$, $S= 100$
\end{table}

\begin{table}[h]
	\footnotesize
	\centering
	\caption{Relative Accuracy of the First and Second Order Approximations}
	\label{my-label}
	\begin{tabular}{|l|l|l|l|l|l|}
		\hline
		$\epsilon$ & Truncated True & $1^{st}$ Order FD \ & $1^{st}$ Order Exact \ & $2^{nd}$ Order FD \ & $2^{nd}$ Order Exact \ \\ \hline
		0          &      151.2317  &   0.0000                         &    0.0000                           &    0.0000                        &       0.0000                       \\
		0.001      &   151.2256  &   0.0000                       &     0.0000                          &   0.0000                         &    0.0000                           \\
		0.004      &   151.2075 &  0.0000                            &   0.0000                            &  0.0000                          &   0.0000                            \\
		0.016      &   151.1350  & 0.0000                           &   0.0000                            &    0.0000                        &    0.0000                           \\
		0.064      &   150.8452  &   0.0000                         &   0.0000                            &    0.0000                        &    0.0000                           \\
		0.256      &   149.6945  &  0.0059 &0.0059 & 0.0059                          &   0.0059                            \\
		1.024      &   145.5617  & 0.3532 &0.3531 &  0.3530                         &0.3531                              \\ \hline
	\end{tabular}
\vspace{1ex}

Parameters: $\alpha = 0.5$, $s =40$, $S= 100$
\end{table}

\begin{table}[h]
	\footnotesize
	\centering
	\caption{Relative Accuracy of the First and Second Order Approximations}
	\label{my-label}
	\begin{tabular}{|l|l|l|l|l|l|}
		\hline
		$\epsilon$ & Truncated True & $1^{st}$ Order FD \ & $1^{st}$ Order Exact \ & $2^{nd}$ Order FD \ & $2^{nd}$ Order Exact \ \\ \hline
		0          &      58.2770  &   0.0000                         &     0.0000                          &    0.0000                        &    0.0000                           \\
		0.001      &   58.2764  &   0.0000                       &      0.0000                         &   0.0000                         &   0.0000                            \\
		0.004      &   58.2748 &  0.0000                            &   0.0000                            &  0.0000                          &  0.0000                             \\
		0.016      &   58.2684  & 0.0000                           &     0.0000                          &    0.0000                        &   0.0000                            \\
		0.064      &   58.2427  &   0.0000                         &    0.0000                           &    0.0000                        &  0.0000                             \\
		0.256      &   58.1398  &  0.0000 &0.0000 & 0.0000                         & 0.0000                              \\
		1.024      &   57.7292  & 0.0015 &0.0015 &  0.0016                         &0.0015                               \\ \hline
	\end{tabular}
\vspace{1ex}

Parameters: $\alpha = 1.0$, $s =40$, $S= 100$
\end{table}
\clearpage
\newpage

\pdfbookmark[1]{References}{link7}

%
\end{document}